\newtheorem{Theorem}{Theorem}[section]
\newtheorem{Corollary}[Theorem]{Corollary}
\newtheorem{Definition}[Theorem]{Definition}
\newtheorem{Lemma}[Theorem]{Lemma}
\newtheorem{Proposition}[Theorem]{Proposition}
\newtheorem{remarks}[Theorem]{Remarks}
\newtheorem{remark}[Theorem]{Remark}
\newtheorem{example}[Theorem]{Example}
\newtheorem{examples}[Theorem]{Examples}
\begin{document}

\title[\hfilneg \hfil Convergence of series of strongly integrable random variables]
{Convergence of series of strongly integrable random variables and
applications}
\author[F. Boukhari and D. Malti\hfil \hfilneg]
{Fakhreddine Boukhari and Dounyazed Malti}

\address{Fakhreddine Boukhari\hfill\break
Department of mathematics\hfill\break Faculty of sciences, Abou
Bekr Belkaid University \hfill\break BP 119, Tlemcen 13000.
Algeria} \email{f\_boukhari@yahoo.fr}

\address{Dounyazed Fatiha Malti\hfill\break
Sciences and technology preparatory school \hfill\break BP 165, RP
Bel horizon Tlemcen 13000. Algeria}
\email{dounyazed.malti@gmail.com}

\date{}

%\thanks{Submitted November 26, 2004. Published May 11, 2005.}
\thanks{Research partially supported by CNEPRU C00L03UN130120150006,
Algeria and \hfill\break\indent The Laboratoire de Statistique et
Mod\'elisations Al\'eatoires.} \subjclass[2010]{60G17, 60F10,
60G50} \keywords{Almost sure convergence; maximal inequalities;
subgaussian random
 \hfill\break\indent variables; exponential integrability; random series}

%%%%%%%%%%%%%%%%%%%%%%     ABSTRACT

 \begin{abstract}
We investigate the convergence of series of random variables with
second exponential moments. We give sufficient conditions for the
convergence of these series with respect to an exponential Orlicz
norm and almost surely. Applying this result to $d$-subgaussian
series, we examine the asymptotic behavior of weighted series of
subgaussian random variables in a unified setting.
\end{abstract}

\maketitle \numberwithin{equation}{section}
%\newtheorem{theorem}{Theorem}[section]
%\newtheorem{definition}[theorem]{Definition}
%\newtheorem{remark}[theorem]{Remark}

%%%%%%%%%%%%%%%%%%%%%%%%%%%%%%%%%%%%%%%%%%%%%%%%%%%%%%%%%%%%%%%%%%%%%%

%                          Première  PARTIE

%%%%%%%%%%%%%%%%%%%%%%%%%%%%%%%%%%%%%%%%%%%%%%%%%%%%%%%%%%%%%%%%%%%%%%

\section{Introduction}

Let $\tilde{a}=\{a_k,\ k\ge 1\}$ be a sequence of real numbers,
$\mathcal{X}=\{X_k,\ k\ge 1\}$ a sequence of real random
variables, defined on some probability space $(\Omega,
\mathcal{F}, P)$. We focus on the asymptotic behavior of the
weighted series $\mathcal{S}(\tilde{a},\mathcal{X})=\sum_n a_n
X_n$, when the increments belong to some exponential Orlicz space.
This problem was examined for the first time by Chow \cite{chow}
when $\mathcal{X}$ is a sequence of independent subgaussian random
variables. In Azuma \cite{az} the series
$\mathcal{S}(\tilde{a},\mathcal{X})$ were investigated when the
increments form a sequence of conditionally subgaussian random
variables. These results were extended to $m$-dependent
subgaussian random variables by Ouy \cite{ouy} and to negatively
dependent subgaussian random variables by Amini et al
\cite{amini1,amini2}. More recently Guiliano et al \cite{anto2}
examined the convergence of the series
$\mathcal{S}(\tilde{a},\mathcal{X})$ when $\mathcal{X}$ is an
$m$-acceptable sequence of $\phi$-subgaussian random variables,
they obtained positive results assuming that
$x\mapsto\phi(|x|^{1/p})$ is a convex function for some $p\in
[1,2]$, then they deduced the corresponding results for the
classical subgaussian case.

In this note, we first study the asymptotic behavior of the series
$\mathcal{S}=\{\sum_1^n X_k,\ n\ge 1\}$, when $\mathcal{X}$ is a
sequence of random variables satisfying some increment condition
with respect to an exponential Orlicz norm, defined by the
function $\varphi(x)=\exp{(x^2)}-1$. Using the entropy criterion
of Dudley, we prove our main result in section 2. When the series
$\mathcal{S}$ converges, we provide estimates for the Orlicz norms
of the limit and the two-sided maximal function of its partial
sums. We would like to underline the fact that the result of
section 2 is stated without any assumption on the dependence
structure of $\mathcal{X}$.

Subgaussian random variables are typical examples of random
variables belonging to the Orlicz space $L^{\varphi}(\Omega)$. We
collect some useful facts about these variables in section 3, we
also introduce the class of processes with $d$-subgaussian
increments. This last notion will be crucial in our investigation.

In section 4 the problem of convergence of the series
$\mathcal{S}(\tilde{a},\mathcal{X})$ is considered assuming that
$\mathcal{X}$ is a $d$-subgaussian increments sequence. This
framework enables us to deduce corresponding results for series of
independent or negatively dependent increments and for
conditionally subgaussian series, we thus recover some results of
\cite{amini1,amini2,az,chow, anto2}. Finally, we examine an
example of a $d$-subgaussian series, which is beyond the scope of
the last quoted papers.

%%%%%%%%%%%%%%%%%%%%%%%%%%%%%%%%%%%%%%%%%%%%%%%%%%%%%%%%%%%%%%%%%%%%%%%%%%%%%%%%%%%%%%%%%%%
%
%                                  Deuxième Partie
%
%%%%%%%%%%%%%%%%%%%%%%%%%%%%%%%%%%%%%%%%%%%%%%%%%%%%%%%%%%%%%%%%%%%%%%%%%%%%%%%%%%%%%%%%%%%

\section{Convergence of $L^{\varphi}$-valued series under increment condition}

Let $(\Omega,\mathcal{F}, P)$ be a probability space, a Young
function $\psi:\ \mathbb{R}_+\longrightarrow\mathbb{R}_+$ is an
increasing convex function with $\psi(0)=0$ and
$\psi(x)\rightarrow +\infty$ when $x\rightarrow +\infty$. For any
Young function $\psi$ we can associate the Orlicz space
$L^\psi(\Omega)$, the space of random variable
$X:\Omega\longrightarrow\mathbb{R}$ such that
$\mathbb{E}\bigl(\psi(a|X|)\bigr)<\infty$ for some $0<a<\infty$,
see for instance  Chapter III in \cite{rr}. Recall that the space
$L^\psi(\Omega)$ is endowed with the norm
$$
\forall X\in L^\psi(P),\quad\|X\|_\psi=\inf\Bigl\{c>0\ :\
\mathbb{E}\bigl(\psi\{\frac{|X|}{c}\}\bigr)\le 1\Bigr\}
$$

\noindent and that $\bigl(L^\psi(\Omega), \|\cdot\|_\psi\bigr)$ is
a Banach space.

Let $E$ be a non empty set, a pseudo-metric $d$ is a positive
application on $E\times E$ which has the properties of a distance
except that it does not necessarily separate points ($d(s,t) = 0$
does not always imply $s=t$). For a pseudo-metric space $(E,d)$
and $\delta>0$, the $\delta$-entropy number $N(E,d,\delta)$ is the
smallest covering number (possibly infinite) of $E$ by open
$d$-balls of radius $\delta$. Dudley \cite{dud} used this last
notion to formulate his famous entropy criterion for local
continuity and boundedness of sample paths of Gaussian processes.
This result was further extended by Pisier to non-Gaussian
processes satisfying incremental Orlicz conditions, see for
instance \cite{lt}, Chapter XI. Since we only deal with random
series, we state Dudley's criterion for countably indexed
processes.

\begin{Theorem}[Dudley]\label{dudley}
Let $E$ be a countable set, provided with a pseudo-metric $d$ and
let $X=\{X_t,\ t\in E \}$ be a stochastic precess indexed on $E$,
with a basic probability space $(\Omega,\mathcal{F}, P)$, and
satisfying the following increment condition
\begin{equation*}
\forall s,t\in E,\qquad \|X_t-X_s\|_{\psi}\le d(s,t).\label{accr0}
\end{equation*}

\noindent Set $diam(E,d)=\sup_{s,t\in E}d(s,t)$ and assume that
the entropy integral

\begin{equation*}
\mathcal{I}(E,d)=\int_0^{diam(E,d)} \psi^{-1}\bigl(N(E,d,u)\bigr)\
du\label{dud0}
\end{equation*}

\noindent is convergent. Then
\begin{equation}
\bigl\|\sup_{s,t\in E}|X_t-X_s|\bigr\|_{\psi}\le 8\ \!
\mathcal{I}(E,d).\label{accr01}
\end{equation}
\end{Theorem}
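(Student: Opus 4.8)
The plan is to prove the bound by the classical chaining (successive-approximation) argument, reducing the two-sided oscillation to increments along a sequence of finite nets whose mesh shrinks geometrically. First I would fix a base point $t_0\in E$ and note that $\sup_{s,t\in E}|X_t-X_s|\le 2\sup_{t\in E}|X_t-X_{t_0}|$, so by the triangle inequality for $\|\cdot\|_\psi$ it suffices to control $\bigl\|\sup_t|X_t-X_{t_0}|\bigr\|_\psi$ and absorb a factor $2$ at the end. Convergence of $\mathcal{I}(E,d)$ forces $N(E,d,u)<\infty$ for every $u>0$ (otherwise $\psi^{-1}\bigl(N(E,d,\cdot)\bigr)$ would be infinite on an interval and the integral would diverge), so $(E,d)$ is totally bounded. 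Writing $D=\mathrm{diam}(E,d)$ and $\delta_n=2^{-n}D$, I would choose for each $n\ge1$ a finite net $E_n\subset E$ with $\mathrm{card}(E_n)=N(E,d,\delta_n)$ whose $d$-balls of radius $\delta_n$ cover $E$, set $E_0=\{t_0\}$ (a single base point, legitimate since every point lies within $\delta_0=D$ of $t_0$), and let $\pi_n:E\to E_n$ assign to each point a net point within distance $\delta_n$.

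The heart of the argument is to organise the approximation so that the number of increments at each scale is governed by a single entropy number rather than its square. To that end I would chain through \emph{nested} projections: put $q_n(t)=\pi_n(t)$ and, for $u\in E_n$, a parent $p(u)=\pi_{n-1}(u)\in E_{n-1}$, so that the level-$k$ increment $X_{q_k(t)}-X_{p(q_k(t))}$ depends only on $q_k(t)\in E_k$ and therefore takes at most $N(E,d,\delta_k)$ distinct values. Since $d(t,\pi_n(t))<\delta_n\to0$ gives $X_{\pi_n(t)}\to X_t$ in $L^\psi$, the telescoping identity
\[
X_t-X_{t_0}=\sum_{k\ge1}\bigl(X_{q_k(t)}-X_{p(q_k(t))}\bigr)
\]
holds with $L^\psi$-convergence; passing to absolute values and taking the supremum over the countable set $E$ yields, almost surely, $\sup_t|X_t-X_{t_0}|\le\sum_{k\ge1}M_k$ with $M_k=\max_{u\in E_k}|X_u-X_{p(u)}|$. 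Each $M_k$ is a maximum of at most $N(E,d,\delta_k)$ variables, every one of $\|\cdot\|_\psi$-norm at most $d(u,p(u))<\delta_{k-1}=2\delta_k$. Invoking the classical maximal inequality for Orlicz norms, $\bigl\|\max_{i\le m}|Y_i|\bigr\|_\psi\le\psi^{-1}(m)\max_i\|Y_i\|_\psi$, I obtain $\|M_k\|_\psi\le\psi^{-1}\bigl(N(E,d,\delta_k)\bigr)\,\delta_{k-1}$.

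It remains to sum and compare to the integral. The triangle inequality gives
\[
\bigl\|\sup_t|X_t-X_{t_0}|\bigr\|_\psi\le\sum_{k\ge1}\psi^{-1}\bigl(N(E,d,\delta_k)\bigr)\,\delta_{k-1}=2\sum_{k\ge1}\delta_k\,\psi^{-1}\bigl(N(E,d,\delta_k)\bigr),
\]
and since $u\mapsto\psi^{-1}\bigl(N(E,d,u)\bigr)$ is nonincreasing, on $[\delta_{k+1},\delta_k]$ (of length $\delta_k/2$) it dominates $\psi^{-1}\bigl(N(E,d,\delta_k)\bigr)$, so $\delta_k\,\psi^{-1}\bigl(N(E,d,\delta_k)\bigr)\le 2\int_{\delta_{k+1}}^{\delta_k}\psi^{-1}\bigl(N(E,d,u)\bigr)\,du$. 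Summing gives a total of at most $2\mathcal{I}(E,d)$, hence $\bigl\|\sup_t|X_t-X_{t_0}|\bigr\|_\psi\le 4\,\mathcal{I}(E,d)$, and the initial factor $2$ produces the claimed $8\,\mathcal{I}(E,d)$. The main obstacle, and the point deserving the most care, is the nested-net bookkeeping: arranging the level-$k$ increments to range over only $N(E,d,\delta_k)$ values rather than $N(E,d,\delta_k)N(E,d,\delta_{k-1})$, which is exactly what makes the estimate match a single entropy integral. The accompanying technical points — convergence of the chaining series in $L^\psi$, the legitimacy of norming the almost-sure supremum bound term by term, and measurability (guaranteed here by the countability of $E$) — are then routine.
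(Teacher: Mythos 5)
Your overall architecture is the right one — and, for what it is worth, it is the only proof available here: the paper does not prove Theorem \ref{dudley} at all, but quotes it from Weber \cite{w1}, and its own Remark concedes that the source establishes \eqref{accr01} with a universal constant $C$ rather than with the constant $8$. Your chaining scheme (dyadic nets, nested parent links so that level-$k$ increments range over only $N(E,d,\delta_k)$ values, dyadic comparison of the sum with the entropy integral, countability disposing of measurability) is exactly the standard route. The genuine gap is the step you call ``the classical maximal inequality for Orlicz norms'': the bound $\bigl\|\max_{i\le m}|Y_i|\bigr\|_\psi\le\psi^{-1}(m)\max_i\|Y_i\|_\psi$ is \emph{false} for a general Young function. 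Take $Y_1=\cdots=Y_m=Y$ with $\|Y\|_\psi=1$; the claim then reads $\psi^{-1}(m)\ge 1$, which fails for every $m<\psi(1)$, and $\psi(1)$ can be made arbitrarily large (replace $\psi(x)$ by $\psi(x/\sigma)$ with $\sigma$ small — the theorem is stated for arbitrary $\psi$). Even for the paper's $\varphi(x)=e^{x^2}-1$ one can check failure at $m=2$ with two i.i.d.\ two-valued variables. What is classically true with constant one is the \emph{expectation} version, $\mathbb{E}\max_{i\le m}|Y_i|\le\psi^{-1}(m)\max_i\|Y_i\|_\psi$, obtained from Jensen and $\mathbb{E}\,\psi(\max_i|Y_i|/c)\le\sum_i\mathbb{E}\,\psi(|Y_i|/c)\le m$; in $\psi$-norm one only gets $\bigl\|\max_{i\le m}|Y_i|\bigr\|_\psi\le K_\psi\,\psi^{-1}(m)\max_i\|Y_i\|_\psi$ with a constant $K_\psi$ depending on $\psi$, under a growth hypothesis such as $\limsup_{x,y\to\infty}\psi(x)\psi(y)/\psi(cxy)<\infty$ (which $\varphi$ satisfies). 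Consequently your argument, once repaired, proves $\mathbb{E}\sup_{s,t\in E}|X_t-X_s|\le 8\,\mathcal{I}(E,d)$ exactly, or $\bigl\|\sup_{s,t\in E}|X_t-X_s|\bigr\|_\psi\le C(\psi)\,\mathcal{I}(E,d)$ — i.e.\ precisely the statement the paper's Remark attributes to \cite{w1} — but it does not deliver the $\psi$-norm bound with the absolute constant $8$ as claimed, and this cannot be fixed by redistributing constants in your dyadic comparison, since the defect sits in the per-level norm estimate $\|M_k\|_\psi\le\psi^{-1}(N(E,d,\delta_k))\,\delta_{k-1}$ itself.

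There is also a smaller, repairable slip in the chain: with $q_k(t)=\pi_k(t)$ and $p(u)=\pi_{k-1}(u)$ for $u\in E_k$, the sum $\sum_{k\ge 1}\bigl(X_{q_k(t)}-X_{p(q_k(t))}\bigr)$ does \emph{not} telescope to $X_t-X_{t_0}$, because $p(\pi_k(t))=\pi_{k-1}(\pi_k(t))$ is in general different from $\pi_{k-1}(t)$. The standard fix — which your parent map is clearly designed for — is to chain through iterated parents of the terminal point: set $u_n=\pi_n(t)$ and $u_{k-1}=p(u_k)$ for $k\le n$, so that $X_{\pi_n(t)}-X_{t_0}=\sum_{k=1}^{n}\bigl(X_{u_k}-X_{p(u_k)}\bigr)$ with $u_k\in E_k$, each term dominated by $M_k=\max_{u\in E_k}|X_u-X_{p(u)}|$; letting $n\to\infty$ (a.s.\ convergence along a subsequence for each fixed $t$, then the countable union over $E$) recovers your bound $\sup_{t\in E}|X_t-X_{t_0}|\le\sum_{k\ge 1}M_k$ almost surely. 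With that correction, and with the maximal inequality weakened as above, the proof is sound and gives the theorem up to the value of the constant.
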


\begin{remark}
Theorem \ref{dudley} is  stated and proved in \cite{w1} with a
universal constant $C$ in \eqref{accr01}.
\end{remark}

From now on,  $\varphi$ will denote the Young function defined on
$\mathbb{R}_+$ by $\varphi(x)=\exp(x^2)-1,\ L^{\varphi}(\Omega)$
the corresponding Orlicz space. Consider a sequence of real random
variables $\mathcal{X}=\{X_k,\ k\ge 1\}$ in $L^{\varphi}(\Omega)$
and the sequence of their partial sums $\mathcal{S}=\{S_n,\ n\ge
1\}$, that is $S_n=\sum_{k=1}^n X_k$ for $n\ge 1$. In this section
we are interested in the asymptotic behavior of the series
$\mathcal{S}$ when the sequence $\mathcal{X}$ satisfies the
following increment condition:\smallskip

\textsl{For some sequence $\tilde{u}=\{u_k,\ k\ge 1\}$ of positive
real numbers, and some $\alpha>0$, we have}
\begin{equation}
\forall\ \! 0\le n<m<\infty,\qquad\|\sum_{k=n+1}^m
X_k\|_{\varphi}\le\Bigl(\sum_{k=n+1}^m
u_k\Bigr)^{\alpha}\label{accr1}
\end{equation}
%\textsl{for some} $\alpha>0$.\smallskip

We emphasize that this will be the only condition on $\mathcal{X}$
for the validity of our main result (Theorem \ref{pr1}), in
particular there is no restriction on the dependence structure of
the underlying sequence.
\begin{Theorem}\label{pr1}
Let $\{X_k,\ k\ge 1\}$ be a sequence of random variables in
$L^{\varphi}(\Omega)$ satisfying the increment condition
\eqref{accr1}, then
\begin{equation}
\Bigl\|\max_{n<s<t\le m}|\sum_{k=s+1}^t X_k|\Bigr\|_{\varphi}\le 8
C(\alpha)\Bigl(\sum_{k=n+1}^{m} u_k\Bigr)^{\alpha},\label{sup2}
\end{equation}
\noindent for $0\le n< m<\infty$, where
$C(\alpha)=\frac{2^{2\alpha+2}}{\sqrt{\alpha}}
\int_{\sqrt{\alpha\ln{3}}}^{+\infty} x^2 e^{-x^2}\ dx<\infty$ .

If in addition the series $\sum_{k\ge 1} u_k$ converges, then
\eqref{sup2} holds true for $m=\infty$, the sequence of partial
sums $\mathcal{S}=\{S_n,\ n\ge 1\}$ converges in
$L^{\varphi}(\Omega)$ and $P$-almost surely to a random variable
$S_{\infty}\in L^{\varphi}(\Omega)$ such that
$\|S_{\infty}\|_{\varphi}\le (\sum_{k\ge 1} u_k)^{\alpha}$,
furthermore
\begin{equation}
\Bigl\|\sup_{1\le i<j}|\sum_{k=i+1}^j X_k|\Bigr\|_{\varphi}\le
8C(\alpha)\Bigl(\sum_{k=1}^{\infty}
u_k\Bigr)^{\alpha}.\label{sup1}
\end{equation}
\end{Theorem}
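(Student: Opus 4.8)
The plan is to recognize the left-hand side of \eqref{sup2} as the oscillation of a process and feed it into Dudley's criterion, Theorem \ref{dudley}, with $\psi=\varphi$. Fix $0\le n<m<\infty$, set $E=\{n,n+1,\dots,m\}$, and for $j\in E$ put $Y_j=\sum_{k=n+1}^{j}X_k$ (so $Y_n=0$) together with $U_j=\sum_{k=n+1}^{j}u_k$. On $E$ I would use the pseudo-metric $d(s,t)=|U_t-U_s|^{\alpha}$; since the $U_j$ are totally ordered, the triangle inequality for $d$ is exactly $|a+b|^{\alpha}\le|a|^{\alpha}+|b|^{\alpha}$, which holds for $0<\alpha\le1$, so $d$ is genuinely a pseudo-metric in that range (for $\alpha>1$ one can instead take the flat pseudo-metric $|U_t^{\alpha}-U_s^{\alpha}|$, which dominates $|U_t-U_s|^{\alpha}$ by superadditivity of $x\mapsto x^{\alpha}$, but the sharp constant $C(\alpha)$ comes from the snowflake computation, so checking the pseudo-metric property is the first thing to pin down). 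The increment condition of Theorem \ref{dudley} is then precisely \eqref{accr1}: for $s<t$, $\|Y_t-Y_s\|_\varphi=\|\sum_{k=s+1}^{t}X_k\|_\varphi\le(U_t-U_s)^{\alpha}=d(s,t)$. Moreover $\max_{n<s<t\le m}|\sum_{k=s+1}^{t}X_k|\le\sup_{s,t\in E}|Y_t-Y_s|$, so \eqref{accr01} will give \eqref{sup2} once the entropy integral is estimated.

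The computational heart is the entropy integral $\mathcal I(E,d)$, and this is where I expect to spend the most care. Using $\varphi^{-1}(y)=\sqrt{\ln(1+y)}$ and $\mathrm{diam}(E,d)=U_m^{\alpha}$, I would bound the covering number by comparing $E$ with the interval $[0,U_m]$: a $d$-ball of radius $u$ is a flat ball of radius $u^{1/\alpha}$, whence $N(E,d,u)\le 2U_m\,u^{-1/\alpha}$ for $0<u\le U_m^{\alpha}$. Substituting $u=U_m^{\alpha}w^{\alpha}$ turns $\mathcal I(E,d)$ into $U_m^{\alpha}$ times $\alpha\int_0^1 w^{\alpha-1}\sqrt{\ln(c/w)}\,dw$, and the further change of variable $x^2=\alpha(\ln c-\ln w)$ produces exactly an integral of the shape $\int_{\sqrt{\alpha\ln c}}^{\infty}x^2e^{-x^2}\,dx$; collecting the prefactors yields $\mathcal I(E,d)\le C(\alpha)\,(\sum_{k=n+1}^{m}u_k)^{\alpha}$. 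The delicate points are the exact covering constant and tracking the base $c$ of the logarithm through the two substitutions, since these fix the numerical constant in $C(\alpha)$. Dudley's bound \eqref{accr01} then gives \eqref{sup2}.

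For the second part, assume $\sum_{k\ge1}u_k<\infty$. Letting $m\to\infty$ in \eqref{sup2}, the finite maxima increase to $\sup_{n<s<t}|\sum_{k=s+1}^{t}X_k|$, and lower semicontinuity of $\|\cdot\|_\varphi$ under monotone almost sure limits (monotone convergence applied inside the infimum defining the norm) upgrades \eqref{sup2} to $m=\infty$; taking $n=0$ gives \eqref{sup1}. Next, \eqref{accr1} shows $\{S_n=\sum_1^n X_k\}$ is Cauchy in $L^{\varphi}(\Omega)$, since $\|S_m-S_n\|_\varphi\le(\sum_{k>n}u_k)^{\alpha}\to0$; as $L^{\varphi}(\Omega)$ is a Banach space there is a limit $S_\infty\in L^{\varphi}(\Omega)$, and continuity of the norm together with $\|S_n\|_\varphi\le(\sum_{k=1}^{n}u_k)^{\alpha}$ gives $\|S_\infty\|_\varphi\le(\sum_{k\ge1}u_k)^{\alpha}$.

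The remaining point, the second place demanding care, is almost sure convergence, which I would read off the maximal inequality. The tail oscillation $\Theta_N=\sup_{N\le i<j}|S_j-S_i|$ is non-increasing in $N$, and the $m=\infty$ form of \eqref{sup2} gives $\|\Theta_N\|_\varphi\le 8C(\alpha)(\sum_{k\ge N}u_k)^{\alpha}\to0$. Hence $\Theta_N\to0$ in probability; being monotone it also converges almost surely to some $\Theta_\infty\ge0$, which must therefore vanish almost surely. Thus $\{S_n(\omega)\}$ is almost surely Cauchy and converges almost surely, and its limit coincides with $S_\infty$ because both are limits in probability.
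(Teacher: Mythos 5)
Your strategy coincides with the paper's: Dudley's criterion applied to the partial-sum process, the covering bound $N\le 2U_m\,u^{-1/\alpha}$ obtained by comparing with the interval $[0,U_m]$, the change of variables leading to the Gaussian-tail integral that defines $C(\alpha)$, Cauchy-in-$L^{\varphi}$ for norm convergence, and a maximal inequality for the almost sure part. Your local variants are sound where they apply: doing the finite window $\{n,\dots,m\}$ first and upgrading to $m=\infty$ by monotone convergence inside the norm is a legitimate reversal of the paper's order (the paper proves \eqref{sup1} on all of $\mathbb{N}$ first and deduces \eqref{sup2} by zero-padding, setting $Y_k=X_k$, $v_k=u_k$ for $n<k\le m$ and $Y_k=v_k=0$ otherwise), and your monotone tail-oscillation argument ($\Theta_N$ decreasing, $\|\Theta_N\|_{\varphi}\to 0$, hence $\Theta_N\to 0$ a.s.) is a clean substitute for the paper's route via Markov's inequality and $\|\cdot\|_1\le\|\cdot\|_{\varphi}$.

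The genuine gap is the one you half-spotted and then left unresolved: for $\alpha>1$ your pseudo-metric $d(s,t)=|U_t-U_s|^{\alpha}$ fails the triangle inequality, and your proposed fallback $|U_t^{\alpha}-U_s^{\alpha}|$ runs the entropy computation on a flat metric, producing a bound with a universal constant rather than the stated $C(\alpha)$; since the theorem asserts \eqref{sup2} with $C(\alpha)$ for \emph{every} $\alpha>0$, your proof as written does not establish the statement in that range (you would at least have to check that your flat-metric constant is dominated by $C(\alpha)$, which you do not do). The paper avoids the issue entirely by never asking $d^{\alpha}$ to be a metric: it feeds Dudley's theorem the pseudo-metric $d_{\varphi}(n,m)=\|S_m-S_n\|_{\varphi}$ itself, which is automatically a pseudo-metric for all $\alpha>0$ because it is pulled back from the norm, and it uses \eqref{accr1} only through the ball inclusion $B_d(l,\epsilon)\subset B_{d_{\varphi}}(l,\epsilon^{\alpha})$, yielding the covering-number comparison $N(\mathbb{N},d_{\varphi},\epsilon^{\alpha})\le N(\mathbb{N},d,\epsilon)$ with the unsnowflaked distance $d(i,j)=\sum_{k=i\wedge j+1}^{i\vee j}u_k$. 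The snowflake substitution then acts on covering numbers alone, where no triangle inequality for $d^{\alpha}$ is needed, and delivers $C(\alpha)$ uniformly in $\alpha>0$; adopting this device closes your gap with no other change to your argument.
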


\begin{proof}
We first prove \eqref{sup1}, let $\alpha>0$, assume
$\textbf{u}:=\sum_{k\ge 1} u_k<\infty$ and set $S_0=0,\ U_0=0,\
U_n=\sum_{k=1}^n u_k$ for $n\ge 1$ and $\mathcal{U}=\{U_n,\ n\ge
0\}$. We will apply Theorem \ref{dudley} with $E=\mathbb{N}$,
endowed with the pseudo metric
$d_{\varphi}(n,m)=\|S_m-S_n\|_{\varphi}$ for $n\le m$, to succeed
we need to estimate the entropy numbers
$N(\mathbb{N},d_{\varphi},\epsilon)$. Let $d$ be the distance
defined on $\mathbb{N}^2$ by $d(i,i)=0$ for each $i\in \mathbb{N}$
and $d(i,j)=\sum_{k=i\wedge j+1}^{i\vee j} u_k$ otherwise. Using
$d_{\varphi}$ and $d$, assumption \eqref{accr1} becomes
$d_{\varphi}(n,m)\le d^{\alpha}(n,m)$ for all $n,m$ in
$\mathbb{N}$. We also have $\sup_{s,t\ge 0} d(s,t)=
\textbf{u}<\infty$, hence $N(\mathbb{N},d,\epsilon)<\infty$ for
any $\epsilon>0$, this entails the existence of a finite sequence
$\{l_k,\ 1\le k\le N(\mathbb{N},d,\epsilon)\}$ in $\mathbb{N}$,
and a finite collection of open $d$-balls $\{B_d(l_k,\epsilon),\
1\le k\le N(\mathbb{N},d,\epsilon)\}$ which form an
$\epsilon$-covering of $\mathbb{N}$. Thus
\begin{equation*}
\mathbb{N}\subset\bigcup_{k=1}^{N(\mathbb{N},d,\epsilon)}
B_d(l_k,\epsilon)\subset\bigcup_{k=1}^{N(\mathbb{N},d,\epsilon)}
B_{d_{\varphi}}(l_k,\epsilon^{\alpha}),
\end{equation*}
\noindent this shows that
\begin{equation}
N(\mathbb{N},d_{\varphi},\epsilon^{\alpha})\le
N(\mathbb{N},d,\epsilon).\label{entropie00}
\end{equation}
Let $a,b$ be two reals with $a<b,\ d_0$ the usual distance on
$\mathbb{R}$ ($d_0(x,y)=|x-y|$ for $x,y$ in $\mathbb{R}$) and
denote by  $[x]$ the greatest integer less than or equal to the
real $x$. It is well-known that $N([a,b],d_0,\delta)\le
[\frac{b-a}{\delta}]+1\le 2\ \!\frac{b-a}{\delta}$, for any
$0<\delta<b-a$, this and the fact that $\mathcal{U}$ is a subset
of $[0, \textbf{u}]$, yield
\begin{equation}
N(\mathcal{U},d_0,\delta)\le N([0, \textbf{u}], d_0,\delta)\le 2\
\!\frac{\textbf{u}}{\delta},\label{entropie1}
\end{equation}
\noindent for any $0<\delta<\textbf{u}$. Now $\mathcal{U}$ is a
countable set and $d(n,m)=d_0(U_n,U_m)$, so we can assign to each
open $d$-ball $B_d(n,\delta)$ of $(\mathbb{N},d)$, one and only
one open $d_0$-ball $B_{d_0}(U_n,\delta)$ of $(\mathcal{U},d_0)$
such that: $ m\in B_d(n,\delta)$ if and only if $U_m\in
B_{d_0}(U_n,\delta)$. Combining this with \eqref{entropie1}, we
obtain
\begin{equation*}
N(\mathbb{N},d,\delta)=N(\mathcal{U},d_0,\delta)\le 2\
\!\frac{\textbf{u}}{\delta} \qquad\text{for }\
0<\delta<\textbf{u}.\label{entropie2}
\end{equation*}
\noindent Thus taking $\delta=\epsilon^{1/\alpha}$ in this last
estimate and making use of \eqref{entropie00}, we get
\begin{equation*}
N(\mathbb{N},d_{\varphi},\epsilon)\le 2\ \!\textbf{u}\
\!\epsilon^{-\frac{1}{\alpha}} \qquad\text{for }\
0<\epsilon<\textbf{u}^{\alpha}.\label{entropief}
\end{equation*}
This inequality is enough to ensure the finiteness of the entropy
integral $\mathcal{I}(\mathbb{N},d_{\varphi})$ in Dudley's
Theorem, indeed it is easily seen that
$\varphi^{-1}(x)=\sqrt{\ln(x+1)}$ for $x>0$ and
$diam(\mathbb{N},d_{\varphi})=\sup_{n,m\ge
1}d_{\varphi}(n,m)\le\textbf{u}^{\alpha}$. On the other hand, as
$x>\frac{x+1}{2}$ for any $x>1$, we may write
$$
\mathcal{I}(\mathbb{N},d_{\varphi})\le
\int_0^{\textbf{u}^{\alpha}} \sqrt{\ln\bigl(2\ \!\textbf{u}\
\!\epsilon^{-\frac{1}{\alpha}}+1\bigr)}\ d\epsilon\le
\Bigl[\frac{2^{2\alpha+2}}{\sqrt{\alpha}}
\int_{\sqrt{\alpha\ln{3}}}^{+\infty} x^2 e^{-x^2}\ dx\ \Bigr]\
\!\textbf{u}^{\alpha}:=C(\alpha)\ \!\textbf{u}^{\alpha}
$$
\noindent with  $C(\alpha)=\frac{2^{2\alpha+2}}{\sqrt{\alpha}}
\int_{\sqrt{\alpha\ln{3}}}^{+\infty} x^2 e^{-x^2}\ dx<\infty$.
Combining this with \eqref{accr01}, we reach that
$$
\Bigl\|\sup_{i,j\ge 1}|\sum_{k=i}^j X_k|\Bigr\|_{\varphi}\le
8C(\alpha)\textbf{u}^{\alpha},
$$
\noindent which is exactly \eqref{sup1}. To prove \eqref{sup2},
fix $0\le n<m<\infty$ and apply the first step to the sequences
$\{Y_k,\ k\ge 1\}$ and $\{v_k,\ k\ge 1\}$, where we have set
$Y_k=X_k,\ v_k=u_k$ for $n< k\le m$ and $Y_k=0,\ v_k=0$ otherwise.
It is clear that the increment condition \eqref{accr1} is
fulfilled for these new sequences, moreover $\sum_{k\ge
1}v_k=\sum_{k=n+1}^m u_k<\infty$, hence one has by \eqref{sup1}
\begin{align*}
\Bigl\|\max_{n<s< t\le m}|\sum_{k=s+1}^t X_k|\Bigr\|_{\varphi}
&=\Bigl\|\sup_{1\le i<j}|\sum_{k=i+1}^j Y_k|\Bigr\|_{\varphi}     \\
&\le 8C(\alpha)\Bigl(\sum_{k=1}^{\infty} v_k\Bigr)^{\alpha}       \\
& =  8C(\alpha) \Bigl(\sum_{k=n+1}^{m} u_k\Bigr)^{\alpha}.      \,
\end{align*}
 Now we turn to the convergence problem of the series
$\mathcal{S}$. Since $\sum_{k\ge 1}u_k<\infty$, then \eqref{accr1}
immediately gives $\|\sum_{k=n+1}^m X_k\|_{\varphi}\longrightarrow
0$ as $n,m\rightarrow\infty$, thereby $\mathcal{S}$ is a Cauchy
sequence in $L^{\varphi}(\Omega)$, so it converges in
$\|\cdot\|_{\varphi}$-norm to a random variable $S_{\infty}\in L
^{\varphi}(\Omega)$, moreover taking $n=0$ and letting $m$ tend to
infinity in \eqref{accr1}, we get $\|S_{\infty}\|_{\varphi}\le
\textbf{u}^{\alpha}$. For the almost sure convergence, let $n,r\ge
1$, making use of Markov's inequality, the fact that
$\|\cdot\|_1\le \|\cdot\|_{\varphi}$ and \eqref{sup2}, we obtain
$$
P\bigl(\ \sup_{1\le k\le r}\
\!\bigl|S_{n+k}-S_n\bigr|>\epsilon\bigr)
\le\frac{1}{\epsilon}\Bigl\|\ \!\sup_{n<j\le n+r}|\!\sum_{i=n+1}^j
X_i|\Bigr\|_{\varphi} \le
\frac{8C(\alpha)}{\epsilon}\Bigl(\sum_{k=n+1}^{\infty}
u_k\Bigr)^{\alpha},
$$
\noindent for any $\epsilon> 0$. Consequently $P\bigl(\
\!\sup_{k\ge 1}\
\!\bigl|S_{n+k}-S_n\bigr|>\epsilon\bigr)\longrightarrow 0$ as
$n\rightarrow\infty$, which proves the almost sure convergence of
$\mathcal{S}$ and achieves the proof of Theorem \ref{pr1}.
\end{proof}

\begin{remarks}
For $\alpha>0$, $C(\alpha)$ will denote the positive constant of
Theorem \ref{pr1}, in particular $C(1/2)\simeq 8.26$ and
\begin{equation*}
C(\alpha)\le\sqrt{\frac{2}{\alpha}}\Bigl[(\frac{4}{3})^{\alpha}
\sqrt{2\alpha\ln{3}}+4^{\alpha}\sqrt{\frac{\pi}{2}}\
\Bigr],\label{constante}
\end{equation*}
\end{remarks}

%%%%%%%%%%%%%%%%%%%%%%%%%%%%%%%%%%%%%%%%%%%%%%%%%%%%%%

%                Troisième Partie

%%%%%%%%%%%%%%%%%%%%%%%%%%%%%%%%%%%%%%%%%%%%%%%%%%%%%%

\section{processes with $d$-subgaussian increments}

Subgaussian random variables were introduced by Kahane
\cite{kahane1} in his study of random Fourier series. (see also
\cite{kahane2}, Chapter VI) Later on the class of subgaussian
random variables was studied thoroughly by Buldygin and Kozachenko
\cite{buld0}.

A real random variable $X$ is called subgaussian if there exists
$c\ge 0$ such that
\begin{equation*}
\forall t\in\mathbb{R},\qquad \mathbb{E}\exp\ \!\bigl\{
tX\bigr\}\le\exp\ \!\Bigl\{\frac{c^2 t^2}{2}\Bigr\}\label{stand}.
\end{equation*}
Centered Gaussian random variables and centered bounded random
variables are examples of subgaussian random variables.  Thus we
can easily build subgaussian random variables by truncation and
mean-centering.

 Following \cite{buld}, we denote by $Sub(\Omega)$
the class of subgaussian random variables, defined on some
probability space $(\Omega,\mathcal{F}, P)$. For any $X\in
Sub(\Omega)$, we can associate its subgaussian standard $\tau(X)$
defined by
$$
\tau(X)=\inf\Bigl\{c\ge 0:\ \mathbb{E}\exp\ \!\bigl\{
tX\bigr\}\le\exp\ \!\Bigl\{\frac{c^2 t^2}{2}\Bigr\},\
t\in\mathbb{R}\ \Bigr\}.
$$

It is worth noting that the application $X\longrightarrow\tau(X)$
induces a norm on $Sub(\Omega)$, moreover

\begin{Theorem}[\cite{buld}, Theorem 1.2]\label{banach}
The space $Sub(\Omega)$ is a Banach space with respect to the norm
$\tau(\cdot)$.
\end{Theorem}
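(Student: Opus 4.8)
The property that $\tau(\cdot)$ is a norm on the vector space $Sub(\Omega)$ has already been recorded, so the substance of the theorem is \emph{completeness}. The plan is to exploit the comparison between $\tau$ and the $L^2$-norm. First I would record the elementary but decisive estimate $\|X\|_2\le\tau(X)$ for $X\in Sub(\Omega)$. This follows by inserting the expansion $\mathbb{E}\exp(tX)=1+t\mathbb{E}X+\tfrac{t^2}{2}\mathbb{E}X^2+o(t^2)$ into the defining inequality $\mathbb{E}\exp(tX)\le\exp(\tau(X)^2t^2/2)$ and matching the coefficients of $t$ and $t^2$ as $t\to 0$; this simultaneously yields $\mathbb{E}X=0$ and $\mathbb{E}X^2\le\tau(X)^2$. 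A preliminary remark is needed here: the defining inequality does hold with the constant $c=\tau(X)$ itself, not merely for $c>\tau(X)$, which I would justify by choosing admissible $c_k\downarrow\tau(X)$ and letting $k\to\infty$ in $\mathbb{E}\exp(tX)\le\exp(c_k^2t^2/2)$ at each fixed $t$.

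With the domination $\|\cdot\|_2\le\tau(\cdot)$ in hand, the completeness argument runs as follows. Let $\{X_n\}$ be a $\tau$-Cauchy sequence. Then it is Cauchy in $L^2(\Omega)$, and since $L^2(\Omega)$ is complete there is $X\in L^2(\Omega)$ with $X_n\to X$ in $L^2$; passing to a subsequence $\{X_{m_k}\}$ I may also assume $X_{m_k}\to X$ almost surely. It remains to show that $X\in Sub(\Omega)$ and that $\tau(X_n-X)\to 0$.

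Fix $\epsilon>0$ and choose $N$ with $\tau(X_n-X_m)\le\epsilon$ for all $n,m\ge N$; equivalently $\mathbb{E}\exp(t(X_n-X_m))\le\exp(\epsilon^2t^2/2)$ for every $t\in\mathbb{R}$ and all $n,m\ge N$. Fix $n\ge N$ and $t\in\mathbb{R}$. Along the almost surely convergent subsequence, $\exp(t(X_n-X_{m_k}))\to\exp(t(X_n-X))$ almost surely, so Fatou's lemma gives
\[
\mathbb{E}\exp\bigl(t(X_n-X)\bigr)\le\liminf_{k\to\infty}\mathbb{E}\exp\bigl(t(X_n-X_{m_k})\bigr)\le\exp\Bigl(\frac{\epsilon^2t^2}{2}\Bigr).
\]
As $t$ is arbitrary this shows $X_n-X\in Sub(\Omega)$ with $\tau(X_n-X)\le\epsilon$ for all $n\ge N$; in particular $X=X_n-(X_n-X)$ lies in $Sub(\Omega)$ by the triangle inequality already established, and $\tau(X_n-X)\to 0$. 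Hence every $\tau$-Cauchy sequence converges in $Sub(\Omega)$, proving completeness.

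I expect the main obstacle to be the transfer of the subgaussian bound to the limit $X$: the norm $\tau$ is defined through an infimum over constants $c$ and is not a single integral, so one cannot simply pass to a limit inside a norm. The two points that make this work are the almost surely convergent subsequence extracted from $L^2$-convergence, which is available precisely because $\tau$ dominates $\|\cdot\|_2$, and Fatou's lemma applied to the nonnegative integrands $\exp(t(X_n-X_{m_k}))$ at each fixed $t$; together they upgrade the finite-stage Cauchy estimate into a genuine subgaussian bound on $X_n-X$.
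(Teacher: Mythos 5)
The paper states this result without proof, quoting it directly from Buldygin--Kozachenko \cite{buld} (Theorem 1.2), and your argument is correct: the attainment of the infimum in the definition of $\tau(\cdot)$, the moment comparison $\|X\|_2\le\tau(X)$ (together with $\mathbb{E}X=0$), completeness of $L^2(\Omega)$, and Fatou's lemma along an almost surely convergent subsequence are exactly the ingredients needed to transfer the Cauchy estimate to the limit. This is essentially the classical proof given in the cited reference, so your proposal matches the intended argument.
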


The space of subgaussian random variables $Sub(\Omega)$ is a
subspace of Orlicz space $L^{\varphi}(\Omega)$ of section 2. To be
more precise, if $L_1^0(\Omega)$ stands for the space of
integrable and centered random variables, then
$Sub(\Omega)=L^{\varphi}(\Omega)\cap L_1^0(\Omega)$. Furthermore
Kozachenko and Ostrovsky showed that $\tau(\cdot)$ and
$\|\cdot\|_{\varphi}$ are equivalent on $Sub(\Omega)$. Giuliano
\cite{anto1} strengthened their result by proving that
\begin{equation}
\frac{1}{2\sqrt{2}}\
\!\tau(X)\le\|X\|_{\varphi}\le\sqrt{2+2\sqrt{2}}\
\!\tau(X),\label{equivalence}
\end{equation}
\noindent for any $X\in Sub(\Omega)$.

\textsc{Notation :}\ \textsl{For a sequence $\mathcal{X}=\{X_k,\
k\ge 1\}$ in $Sub(\Omega)$, we will denote by
${\bf\tau}=\{\tau_k,\ k\ge 1\}$ the sequence of its subgaussian
standards, i.e. $\tau_k=\tau(X_k)$ for $k\ge 1$, for such random
variables we will write $X_k\hookrightarrow Sub(\tau_k)$}. We also
set $S_n=\sum_{k=1}^n X_k$ for $n\ge 1$.

\begin{Definition}[\cite{lt}, p.\ \!322]\label{dsubgauss}
Let $(T,d)$ be a pseudo-metric space, $\mathcal{Z}=\{Z_t,\ t\in
T\}$ a stochastic precess in $Sub(\Omega)$.
\begin{itemize}
\item The process $\mathcal{Z}$ is said to have $d$-subgaussian
increments, if for all  $\lambda\in \mathbb{R}$ and all $u,v\in T$
\begin{equation}
\mathbb{E}\exp\ \!\bigl\{\lambda(Z_v-Z_u)\bigr\}\le\exp\
\!\Bigl\{\frac{d^2(u,v)\ \! \lambda^2}{2}\Bigr\}.\label{processus}
 \end{equation}
\item A random series $\mathcal{S}=\{S_n,\ n\ge 1\}$ fulfilling
\eqref{processus}, will be called a $d$-subgaussian series. If
$X_1= S_1,\ X_k=S_k-S_{k-1}$ for $k\ge 2$, then
$\mathcal{X}=\{X_k,\ k\ge 1\}$ will be called the $d$-subgaussian
increments sequence of $\mathcal{S}$ or simply the $d$-subgaussian
increments sequence.
\end{itemize}
\end{Definition}

Gaussian processes are certainly prime examples of processes with
$d$-subgaussian increments, however the class of processes with
$d$-subgaussian increments is large. Actually, we can build
examples of such processes using random series of functions, see
for instance Example 5.5 in \cite{jain}. Here we have chosen to
borrow an example from the theory of stochastic integration.
\begin{example}

Let $B=\{B_t,\ t\ge 0\}$ be a standard Brownian motion defined on
$(\Omega,\ \!\mathcal{F}, P)$, $\mathcal{F}_t=\sigma\{B_u,\ u\le
t\}$ its natural filtration and $\mathcal{H}=\{H_t,\ t\ge 0\}$ an
adapted, continuous process, such that $P\{|H_t|\le K\}=1$ for all
$t\ge 0$ and some positive constant $K$. Set for $t\ge 0$
\begin{equation*}
M_t=\int_0^t H_u dB_u\qquad\text{ and }\qquad M=\{M_t,\ t\ge
0\}.\label{yor}
\end{equation*}

$M=\{M_t,\ t\ge 0\}$ is a continuous local martingale and
$<\!\!M,M\!>_t=\int_0^t H^2_u du$ for any $t\ge 0$. Making use of
Itô's Formula, we conclude that the positive process
$$
\mathcal{E}^{\lambda}(M)=\Bigl\{\exp\{\lambda
M_t-\frac{\lambda^2}{2}\int_0^t H^2_u du\},\ t\ge 0\Bigr\}
$$
is also a continuous local martingale, thus it is a
supermartingale. Therefore
$$
\mathbb{E}\bigl(\exp\{\lambda M_t-\frac{\lambda^2}{2}\int_0^t
H^2_u du\}/\mathcal{F}_s\bigr) \le \exp\{\lambda
M_s-\frac{\lambda^2}{2}\int_0^s H^2_u du\},
$$
\noindent for any $0\le s\le t$, which is equivalent to
$$
\mathbb{E}\bigl(\exp\{\lambda(M_t-M_s)-\frac{\lambda^2}{2}\int_s^t
H^2_u du\}/\mathcal{F}_s\bigr) \le 1.
$$
\noindent Now taking expectation in the previous inequality and
using the boundedness assumption on $\mathcal{H}$, we reach that
$$
\mathbb{E}\exp\{\lambda(M_t-M_s)\}\le
\exp\{\frac{K^2(t-s)\lambda^2}{2}\}
$$
\noindent for all $\lambda\in\mathbb{R}$. Hence $M$ is a process
with $d$-subgaussian increments, where we have set
$d^2(s,t)=K^2|t-s|$ for $s,t\ge 0$. On the other hand, if the
bounded process $\mathcal{H}$ is chosen such that for some $t>0$,
$\int_0^t H^2_u du$ is not deterministic, then $M$ can not be a
Gaussian process, see for instance \cite{yor}, Chapter IV.
\end{example}

\begin{remark}\label{independant}

Let $0\le i<j,\ \mathcal{X}=\{X_k,\ k\ge 1\}$ a sequence of random
variables in $Sub(\Omega)$ with $X_k\hookrightarrow Sub(\tau_k)$
for $k\ge 1$. In view of Theorem \ref{banach}, $S_j-S_i\in
Sub(\Omega)$ and $\tau(S_j-S_i)\le \sum_{i+1}^j \tau_k$, thus
\begin{equation*}
\forall t\in\mathbb{R},\qquad \mathbb{E}\exp\
\!\bigl\{t(S_j-S_i)\bigr\}\le\exp\ \Bigl\{\frac{\delta^2(i,j)\ \!
t^2}{2}\Bigr\},
\end{equation*}

\noindent where we have set $\delta(i,i)=0$ and
$\delta(i,j)=\sum_{i\wedge j+1}^{i\vee j} \tau_k$ for $i\neq j$.
This shows that $\{S_n,\ n\ge 1\}$ is a $\delta$-subgaussian
series.
\end{remark}

\section{Convergence of weighted series of subgaussian random variables}

In this section we focus on the convergence problem of series with
$d$-subgaussian increments. The main idea is to rewrite Theorem
\ref{pr1} for these series by means of the equivalence between
$\tau(\cdot)$ and $\|\cdot\|_{\varphi}$ given by
\eqref{equivalence}. For any sequence $\mathcal{Y}=\{Y_k,\ k\ge
1\}$ in $Sub(\Omega),\ S(\mathcal{Y})=\{S_n(\mathcal{Y}),\ n\ge
1\}$ will denote the sequence of its partial sums.  Let $0\le n<
m$ and set
\begin{equation}
S^{*}_{n,m}(\mathcal{Y})= \max_{n<i<j\le m}|\sum_{k=i+1}^j Y_k|\
\quad\text{ and }\quad S^{*}(\mathcal{Y})=\sup_{1\le
i<j}|\sum_{k=i+1}^j Y_k|,\label{max}
\end{equation}
%\noindent The metrics we have encountered in the last section lead
%us to the following

\begin{Definition}\label{distance}
Let $0<\alpha\le 1, {\tilde{u}}=\{u_k,\ k\ge 1\}$  a sequence of
positive real numbers.  $d_{\tilde{u},\alpha}$ is the distance
defined on $\mathbb{N}^2$ by
\begin{equation*}
d_{\tilde{u},\alpha}(i,i)=0\qquad\text{ and }\qquad
d_{\tilde{u},\alpha}(i,j)=\bigl(\sum_{k=i\wedge j+1}^{i\vee j}
u_k\bigr)^{\alpha},\label{metric}
\end{equation*}
\noindent for $i,j\in\mathbb{N}$ with $i\neq j$. When
$u_k=\tau_k^2$ for all $k$ in $\mathbb{N}\backslash\{0\}$, we will
write $d_{\tau^2,\alpha}$.
\end{Definition}

%Applying Theorem \ref{pr1} to $d$-subgaussian series we obtain

\begin{Theorem}\label{pr55}
Let  $0<\alpha\le 1,\ \mathcal{Y}=\{Y_k,\ k\ge 1\}$ a
$d_{\tilde{u},\alpha}$-subgaussian increments sequence, then
\begin{equation*}
\|S^{*}_{n,m}(\mathcal{Y})\|_{\varphi}\le 8\sqrt{2+2\sqrt{2}}\
\!C(\alpha)\ \! d_{\tilde{u},\alpha}(n,m)
\end{equation*}
\noindent for $0\le n< m$, where $C(\alpha)$ is the constant
defined in Theorem \ref{pr1}.

If in addition ${\bf u}:=\sum_{k\ge 1} u_k<\infty$, then
 $\|S^{*}(\mathcal{Y})\|_{\varphi}\le 8\sqrt{2+2\sqrt{2}}\
\!C(\alpha)\ \!{\bf u}^{\alpha}$, furthermore the series
 $\{S_n(\mathcal{Y}),\ n\ge 1\}$
converges with respect to the norms $\tau(\cdot)$ and
$\|\cdot\|_{\varphi}$ and $P$-almost surely to a random variable
$S_{\infty}(\mathcal{Y})\in Sub(\Omega)$ fulfilling
\begin{equation*}
\tau\bigl(S_{\infty}(\mathcal{Y})\bigr)\le {\bf u}^{\alpha}\qquad
\text{ and }\qquad\|S_{\infty}(\mathcal{Y})\|_{\varphi}\le
\sqrt{2+2\sqrt{2}}\ \!{\bf u}^{\alpha}.\label{est1}
\end{equation*}
\end{Theorem}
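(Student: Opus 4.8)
The plan is to transfer Theorem~\ref{pr1}, which is stated for the Orlicz norm $\|\cdot\|_{\varphi}$, to the subgaussian setting by exploiting the two-sided equivalence \eqref{equivalence} between $\tau(\cdot)$ and $\|\cdot\|_{\varphi}$. The first step is to verify that the hypothesis of Theorem~\ref{pr1}, namely the increment condition \eqref{accr1}, holds for the sequence $\mathcal{Y}$ once we pass from the subgaussian data to Orlicz norms. By Definition~\ref{dsubgauss}, the $d_{\tilde{u},\alpha}$-subgaussian property \eqref{processus} says precisely that for $n<m$ the block sum $\sum_{k=n+1}^m Y_k = S_m(\mathcal{Y})-S_n(\mathcal{Y})$ is a subgaussian variable with $\tau\bigl(\sum_{k=n+1}^m Y_k\bigr)\le d_{\tilde{u},\alpha}(n,m)=\bigl(\sum_{k=n+1}^m u_k\bigr)^{\alpha}$. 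Applying the right-hand inequality of \eqref{equivalence} then gives
\begin{equation*}
\Bigl\|\sum_{k=n+1}^m Y_k\Bigr\|_{\varphi}\le\sqrt{2+2\sqrt{2}}\ \tau\Bigl(\sum_{k=n+1}^m Y_k\Bigr)\le\sqrt{2+2\sqrt{2}}\ \Bigl(\sum_{k=n+1}^m u_k\Bigr)^{\alpha}.
\end{equation*}
This is condition \eqref{accr1} up to the harmless constant $\sqrt{2+2\sqrt{2}}$, which I would absorb by replacing $u_k$ with $\bigl(\sqrt{2+2\sqrt{2}}\bigr)^{1/\alpha}u_k$, or more cleanly by noting that Theorem~\ref{pr1} remains valid if the right-hand side of \eqref{accr1} is multiplied by a constant, which simply scales the final bound by that same constant.

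The second step is to invoke Theorem~\ref{pr1} directly. Its conclusion \eqref{sup2} yields, using the notation \eqref{max},
\begin{equation*}
\|S^{*}_{n,m}(\mathcal{Y})\|_{\varphi}=\Bigl\|\max_{n<i<j\le m}\bigl|\sum_{k=i+1}^j Y_k\bigr|\Bigr\|_{\varphi}\le 8\,C(\alpha)\,\sqrt{2+2\sqrt{2}}\ \Bigl(\sum_{k=n+1}^m u_k\Bigr)^{\alpha},
\end{equation*}
where the factor $\sqrt{2+2\sqrt{2}}$ carries through from the rescaled increment bound and $\bigl(\sum_{k=n+1}^m u_k\bigr)^{\alpha}=d_{\tilde{u},\alpha}(n,m)$, which is exactly the first claimed inequality. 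When $\mathbf{u}:=\sum_{k\ge 1}u_k<\infty$, the second half of Theorem~\ref{pr1} applies verbatim: \eqref{sup1} gives the bound on $\|S^{*}(\mathcal{Y})\|_{\varphi}$, and the theorem guarantees convergence of $\{S_n(\mathcal{Y})\}$ both in $\|\cdot\|_{\varphi}$ and $P$-almost surely to a limit $S_{\infty}(\mathcal{Y})\in L^{\varphi}(\Omega)$ with $\|S_{\infty}(\mathcal{Y})\|_{\varphi}\le\sqrt{2+2\sqrt{2}}\ \mathbf{u}^{\alpha}$ (again carrying the constant).

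The remaining work, and the one place requiring a small argument rather than a citation, is to upgrade the conclusion from ``$S_{\infty}(\mathcal{Y})\in L^{\varphi}(\Omega)$'' to ``$S_{\infty}(\mathcal{Y})\in Sub(\Omega)$'' with the sharp bound $\tau\bigl(S_{\infty}(\mathcal{Y})\bigr)\le\mathbf{u}^{\alpha}$. Here I would argue directly with subgaussian standards rather than through the Orlicz norm, since the constant $\sqrt{2+2\sqrt{2}}$ is lossy. Taking $n=0$ in \eqref{processus}, each partial sum $S_m(\mathcal{Y})$ satisfies $\tau\bigl(S_m(\mathcal{Y})\bigr)\le\bigl(\sum_{k=1}^m u_k\bigr)^{\alpha}\le\mathbf{u}^{\alpha}$, so the sequence lies in a $\tau$-ball of radius $\mathbf{u}^{\alpha}$. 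By Theorem~\ref{banach}, $Sub(\Omega)$ is a Banach space under $\tau(\cdot)$; the $\|\cdot\|_{\varphi}$-convergence together with the left-hand inequality of \eqref{equivalence} shows $\{S_m(\mathcal{Y})\}$ is $\tau$-Cauchy, hence it converges in $\tau(\cdot)$ to a limit in $Sub(\Omega)$, which must coincide with $S_{\infty}(\mathcal{Y})$ by uniqueness of limits (both norms being equivalent). Finally, since $\tau(\cdot)$ is a norm and therefore lower semicontinuous with respect to its own convergence, $\tau\bigl(S_{\infty}(\mathcal{Y})\bigr)\le\liminf_m\tau\bigl(S_m(\mathcal{Y})\bigr)\le\mathbf{u}^{\alpha}$, and then the right-hand side of \eqref{equivalence} gives $\|S_{\infty}(\mathcal{Y})\|_{\varphi}\le\sqrt{2+2\sqrt{2}}\ \mathbf{u}^{\alpha}$. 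The main subtlety is just this bookkeeping of constants: one must keep the Orlicz estimates (which tolerate the factor $\sqrt{2+2\sqrt{2}}$) separate from the $\tau$-estimate for the limit (which should be sharp), so I would be careful to derive the $\tau$-bound straight from \eqref{processus} rather than routing it through the Orlicz norm.
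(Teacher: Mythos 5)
Your proposal is correct and follows essentially the same route as the paper: rescale the weights to $u_k'=(2+2\sqrt{2})^{\frac{1}{2\alpha}}u_k$ so that the equivalence \eqref{equivalence} converts the $d_{\tilde{u},\alpha}$-subgaussian increment bound into the hypothesis \eqref{accr1} of Theorem \ref{pr1}, apply that theorem for the maximal inequalities and the $\|\cdot\|_{\varphi}$ and almost sure convergence, and then obtain the sharp estimate $\tau\bigl(S_{\infty}(\mathcal{Y})\bigr)\le{\bf u}^{\alpha}$ directly from \eqref{tau1} with $n=0$ rather than through the lossy Orlicz constant. Your only addition is to spell out why $\tau$-convergence holds (via $\tau$-Cauchyness from \eqref{equivalence}, completeness of $Sub(\Omega)$ from Theorem \ref{banach}, and lower semicontinuity of the norm), a step the paper asserts without detail, so this is a welcome clarification rather than a different argument.
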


\begin{proof}

Let $0\le n<m$ and set $u_k^{'}=(2+2\sqrt{2})^{\frac{1}{2\alpha}}\
\! u_k$\ \! for $k\ge 1$, as $S(\mathcal{Y})$ is a sequence with
$d_{\tilde{u},\alpha}$-subgaussian increments, we have
\begin{equation}
\tau\bigl(\sum_{k=n+1}^m Y_k\bigr)\le
d_{\tilde{u},\alpha}(n,m)=\bigl(\sum_{k=n+1}^{m}
u_k\bigr)^{\alpha}.\label{tau1}
\end{equation}
\noindent This and \eqref{equivalence}, yield
$$
\|\sum_{k=n+1}^m Y_k\|_{\varphi}\le\sqrt{2+2\sqrt{2}}\
\!\tau\bigl(\sum_{n+1}^m Y_k\bigr)\le\bigl(\sum_{n+1}^m
u^{'}_k\bigr)^{\alpha}.
$$
\noindent Thus the increment condition \eqref{accr1} of section 2
is fulfilled for $\{Y_k,\ k\ge 1\}$ and $\{u^{'}_k,\ k\ge 1\}$,
the result follows immediately by applying Theorem \ref{pr1}, in
particular $S(\mathcal{Y})$ converges with respect to
$\tau(\cdot)$, to a random variable $S_{\infty}\in Sub(\Omega)$.
Going back to \eqref{tau1}, putting $n=0$ and letting $m$ tend to
infinity we get $\tau\bigl(S_{\infty}(\mathcal{Y})\bigr)\le {\bf
u}^{\alpha}$, this and \eqref{equivalence} allow us to conclude
that $\|S_{\infty}(\mathcal{Y})\|_{\varphi}\le \sqrt{2+2\sqrt{2}}\
\!{\bf u}^{\alpha}$.
\end{proof}

Now we apply Theorem \ref{pr55} in the study of the asymptotic
behavior of weighted series of subgaussian random variables. Let
$\tilde{a}=\{a_k,\ k\ge 1\}$ be a sequence of real numbers,
$\mathcal{X}=\{X_k,\ k\ge 1\}\in Sub(\Omega)$ and
$\mathcal{S}(\tilde{a},\mathcal{X})=\{S_n(\tilde{a},\mathcal{X}),\
n\ge 1\}$, where we have set
\begin{equation*}
S_n(\tilde{a},\mathcal{X})=\sum_1^n a_k X_k,\qquad n\ge
1.\label{som1}
\end{equation*}

\begin{Corollary}\label{delta}

Let $\mathcal{X}$ be a sequence in $Sub(\Omega)$, with
$X_k\hookrightarrow Sub(\tau_k)$ for $k\ge 1$ and assume
$\sum_{k=1}^{\infty} |a_k|\tau_k<\infty$. Then the sequence of
partial sums $\mathcal{S}(\tilde{a},\mathcal{X})$ converges with
respect to the norms $\tau(\cdot)$ and $\|\cdot\|_{\varphi}$ and
$P$-almost surely to $S_{\infty}(\tilde{a},\mathcal{X})\in
Sub(\Omega)$, such that
\begin{equation*}
\tau\bigl(S_{\infty}(\tilde{a},\mathcal{X})\bigr)\le
\sum_{k=1}^{\infty} |a_k|\tau_k\qquad \text{ and
}\qquad\|S_{\infty}(\tilde{a},\mathcal{X})\|_{\varphi}\le
\sqrt{2+2\sqrt{2}}\ \! \sum_{k=1}^{\infty}
|a_k|\tau_k.\label{est1}
\end{equation*}
\end{Corollary}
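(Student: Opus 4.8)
The plan is to reduce the statement to Theorem \ref{pr55} by absorbing the weights into the summands and specializing to $\alpha=1$. First I would set $Y_k=a_kX_k$ for $k\ge 1$ and consider the sequence $\mathcal{Y}=\{Y_k,\ k\ge 1\}$, so that $S_n(\mathcal{Y})=S_n(\tilde{a},\mathcal{X})$ for every $n$. Since $\tau(\cdot)$ is a norm on $Sub(\Omega)$ (Theorem \ref{banach}), it is positively homogeneous; hence each $Y_k$ lies in $Sub(\Omega)$ with $\tau(Y_k)=|a_k|\,\tau(X_k)=|a_k|\tau_k$, that is $Y_k\hookrightarrow Sub(|a_k|\tau_k)$.

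Next I would invoke Remark \ref{independant} applied to $\mathcal{Y}$: it shows that the partial sum sequence $\{S_n(\mathcal{Y}),\ n\ge 1\}$ is a $\delta$-subgaussian series, where $\delta(i,j)=\sum_{k=i\wedge j+1}^{i\vee j}\tau(Y_k)=\sum_{k=i\wedge j+1}^{i\vee j}|a_k|\tau_k$. Setting $u_k=|a_k|\tau_k$ and choosing $\alpha=1$, this $\delta$ is precisely the distance $d_{\tilde{u},1}$ of Definition \ref{distance}. Thus $\mathcal{Y}$ is a $d_{\tilde{u},1}$-subgaussian increments sequence in the sense required by Theorem \ref{pr55}.

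Finally I would verify the summability hypothesis: by assumption ${\bf u}=\sum_{k\ge 1}u_k=\sum_{k\ge 1}|a_k|\tau_k<\infty$, and the chosen exponent $\alpha=1$ lies in the admissible range $0<\alpha\le 1$. Applying Theorem \ref{pr55} with these choices then yields directly that $\{S_n(\mathcal{Y}),\ n\ge 1\}$ converges with respect to $\tau(\cdot)$, with respect to $\|\cdot\|_{\varphi}$, and $P$-almost surely to a limit $S_{\infty}(\mathcal{Y})\in Sub(\Omega)$ satisfying $\tau\bigl(S_{\infty}(\mathcal{Y})\bigr)\le {\bf u}^{1}=\sum_{k\ge 1}|a_k|\tau_k$ and $\|S_{\infty}(\mathcal{Y})\|_{\varphi}\le\sqrt{2+2\sqrt{2}}\,{\bf u}$. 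Identifying $S_{\infty}(\tilde{a},\mathcal{X})$ with $S_{\infty}(\mathcal{Y})$ gives the claimed convergence and estimates.

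This argument is a direct specialization rather than a genuinely hard computation, so I do not anticipate a serious obstacle. The only points needing care are the two structural facts that do the real work: the homogeneity $\tau(a_kX_k)=|a_k|\tau_k$, where the norm property of $\tau(\cdot)$ (not merely its being a functional) is essential, and the observation that the linear choice $\alpha=1$ turns the sum-of-standards bound of Remark \ref{independant} into exactly the metric $d_{\tilde{u},1}$, while remaining inside the range $0<\alpha\le 1$ for which Theorem \ref{pr55} is stated.
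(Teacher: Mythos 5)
Your proposal is correct and follows essentially the same route as the paper's own proof: set $u_k=|a_k|\tau_k$, observe via Remark \ref{independant} that $\mathcal{S}(\tilde{a},\mathcal{X})$ is a $d_{\tilde{u},1}$-subgaussian series, and apply Theorem \ref{pr55} with $\alpha=1$ to $\mathcal{Y}=\{a_kX_k,\ k\ge 1\}$. The only difference is that you spell out details the paper leaves implicit, namely the homogeneity $\tau(a_kX_k)=|a_k|\tau_k$ coming from Theorem \ref{banach} and the check that $\alpha=1$ lies in the admissible range.
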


\begin{proof}
For $k\ge 1$, let $u_k=|a_k|\tau_k$ and $\tilde{u}=\{u_k,\ k\ge
1\}$, then $\tau(a_k X_k)=u_k$, but as $d_{\tilde{u},1}$ coincide
with the metric $\delta$ of Remark \ref{independant}, then
$\mathcal{S}(\tilde{a},\mathcal{X})$ is a
$d_{\tilde{u},1}$-subgaussian series. The conclusion follows by
applying Theorem \ref{pr55} to  $\mathcal{Y}=\{a_k X_k,\ k\ge
1\}$.
\end{proof}

The question raised by the definition of a $d$-subgaussian process
and Theorem \ref{pr55}, is how to find a suitable pseudo-metric
$d$ such that \eqref{processus} holds true? Remark
\ref{independant} shows that a series in $Sub(\Omega)$ is always a
$\delta$-subgaussian series, however when some knowledge of the
dependence structure of the underlying sequence is available, then
we can replace the trivial distance $\delta$ by a more accurate
metric. Let us recall some dependence concepts which will be
useful for our purpose, we begin by the notion of negative
dependence, introduced by Lehmann \cite{leh}.

\begin{Definition}

\

\begin{itemize}

\item Let $n\ge 2,\ x_1,x_2,\ldots, x_n\in\mathbb{R}$. The real
random variables $X_1, X_2,\ldots, X_n$ are said to be negatively
dependent (ND) if
$$
P\Bigl\{\bigcap_{k=1}^n\{X_k\le x_k\}\Bigr\}\le\prod_{k=1}^n
P\{X_k\le x_k\}\quad\text{and}\quad
P\Bigl\{\bigcap_{k=1}^n\{X_k>x_k\}\Bigr\}\le\prod_{k=1}^n
P\{X_k>x_k\}.
$$

\item A sequence $\mathcal{X}=\{X_k,\ k\ge 1\}$ is said to be
negatively dependent (ND) if any finite subfamily $X_{i_1},
X_{i_2},\ldots, X_{i_n}$ is negatively dependent.
\end{itemize}
\end{Definition}

The second notion we need was introduced by Azuma \cite{az}, let
${\bf c}=\{c_k,\ k\ge 1\}$ be a sequence of positive real numbers,
$\mathcal{B}=\{\mathcal{B}_k,\ k\ge 0\}$ an increasing family of
sub $\sigma$-algebras of $\mathcal{F}$, such that
$\mathcal{B}_0=\{\emptyset, \Omega\}$.
\begin{Definition} Let $\mathcal{X}=\{X_k,\ k\ge 1\}$ be a sequence
of random variables, we will say that $\mathcal{X}$ is
$(\mathcal{B},{\bf c}^2)$-conditionally subgaussian if it is a
sequence of martingale differences with respect to $\mathcal{B}$
and
$$
\forall t\in \mathbb{R},\qquad
\mathbb{E}\bigl(\exp\{tX_k\}/B_{k-1}\bigr)\le\exp\
\!\Bigl\{\frac{c_k^2 t^2}{2}\Bigr\}\qquad P.\ \! a.\ \!
s\quad\text{for}\ k\ge 1.
$$
\end{Definition}
For a sequence $\mathcal{X}\in Sub(\Omega)$ enjoying of one of the
aforementioned properties, we have
\begin{Proposition}\label{pr31}
Let $\mathcal{X}=\{X_k,\ k\ge 1\}$ be a sequence in $Sub(\Omega)$,

\begin{enumerate}

\item If $\mathcal{X}$ is a sequence of independent random
variables with $X_k\hookrightarrow Sub(\tau_k)$ for $k\ge 1$, then
$S(\tilde{a},\mathcal{X})$ is a
$d_{\tilde{u},\frac{1}{2}}$-subgaussian series, where
$\tilde{u}=\{a_k^2\tau_k^2,\ k\ge 1\}$.

\item If $\mathcal{X}$ is a sequence of ND random variables with
$X_k\hookrightarrow Sub(\tau_k)$ for $k\ge 1$, then
$S(\tilde{a},\mathcal{X})$ is a
$d_{\tilde{v},\frac{1}{2}}$-subgaussian series, where
$\tilde{v}=\{2a_k^2\tau_k^2,\ k\ge 1\}$.

\item If $\mathcal{X}$ is $(\mathcal{B},{\bf c}^2)$-conditionally
subgaussian, then $S(\tilde{a},\mathcal{X})$ is a
$d_{\tilde{w},\frac{1}{2}}$-subgausian series, where
$\tilde{w}=\{a_k^2 c_k^2,\ k\ge 1\}$.
\end{enumerate}
\end{Proposition}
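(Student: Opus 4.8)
The plan is to verify, in each of the three cases, the increment inequality \eqref{processus} for the partial sum process $\{S_n(\tilde a,\mathcal X),\ n\ge 0\}$, with the convention $S_0(\tilde a,\mathcal X)=0$ so that the index set matches the domain $\mathbb{N}^2$ of $d_{\cdot,1/2}$. Since $S_j(\tilde a,\mathcal X)-S_i(\tilde a,\mathcal X)=\sum_{k=i+1}^j a_kX_k$ for $0\le i<j$, and since the bounds below are unchanged under $\lambda\mapsto-\lambda$, it suffices to establish, for $0\le i<j$ and all $\lambda\in\mathbb{R}$,
\begin{equation*}
\mathbb{E}\exp\Bigl\{\lambda\sum_{k=i+1}^j a_kX_k\Bigr\}\le \exp\Bigl\{\frac{\lambda^2}{2}\sum_{k=i+1}^j u_k\Bigr\},
\end{equation*}
with $u_k=a_k^2\tau_k^2$ in (1), $u_k=2a_k^2\tau_k^2$ in (2) and $u_k=a_k^2c_k^2$ in (3); the right-hand side is then exactly $\exp\{d^2(i,j)\lambda^2/2\}$ for $d=d_{\tilde u,1/2},d_{\tilde v,1/2},d_{\tilde w,1/2}$ respectively. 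The common starting point is that, by homogeneity of the norm $\tau(\cdot)$, each $a_kX_k$ lies in $Sub(|a_k|\tau_k)$, that is $\mathbb{E}\exp\{t\,a_kX_k\}\le\exp\{a_k^2\tau_k^2t^2/2\}$ for all $t$ (and analogously with $c_k$ in place of $\tau_k$ in the conditional setting, obtained by substituting $t=\lambda a_j$ into the conditional hypothesis).

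Parts (1) and (3) are direct. For (1) I would use independence of the $a_kX_k$ to factor the expectation as $\prod_{k=i+1}^j\mathbb{E}\exp\{\lambda a_kX_k\}$ and bound each factor by $\exp\{a_k^2\tau_k^2\lambda^2/2\}$, which gives the claim with $u_k=a_k^2\tau_k^2$ at once. For (3) the natural tool is the conditional structure: conditioning on $\mathcal B_{j-1}$, the factor $\exp\{\lambda\sum_{k=i+1}^{j-1}a_kX_k\}$ is $\mathcal B_{j-1}$-measurable and the conditional subgaussian hypothesis applied with $t=\lambda a_j$ yields $\mathbb{E}(\exp\{\lambda a_jX_j\}/\mathcal B_{j-1})\le\exp\{a_j^2c_j^2\lambda^2/2\}$ almost surely. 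Peeling off one index at a time, in a finite downward induction from $j$ to $i+1$, produces the product $\exp\{\frac{\lambda^2}{2}\sum_{k=i+1}^j a_k^2c_k^2\}$; this is the usual Azuma telescoping argument.

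The delicate case is (2), and it is the main obstacle. The obstruction is that the numbers $\lambda a_k$ need not share a common sign, whereas the exponential domination for negatively dependent variables holds only when the exponents are all of one sign; hence one cannot simply factor $\mathbb{E}\exp\{\lambda\sum a_kX_k\}$. I would circumvent this by Cauchy--Schwarz, splitting the index range into $P=\{k:\lambda a_k\ge 0\}$ and $N=\{k:\lambda a_k<0\}$:
\begin{equation*}
\mathbb{E}\exp\Bigl\{\lambda\sum_{k=i+1}^j a_kX_k\Bigr\}\le\Bigl(\mathbb{E}\exp\Bigl\{2\lambda\sum_{k\in P} a_kX_k\Bigr\}\Bigr)^{1/2}\Bigl(\mathbb{E}\exp\Bigl\{2\lambda\sum_{k\in N} a_kX_k\Bigr\}\Bigr)^{1/2}.
\end{equation*}
On $P$ the exponents $2\lambda a_k$ are all $\ge 0$ and on $N$ all $\le 0$, so on each group the negative dependence gives $\mathbb{E}\exp\{2\lambda\sum a_kX_k\}\le\prod\mathbb{E}\exp\{2\lambda a_kX_k\}\le\exp\{2\lambda^2\sum a_k^2\tau_k^2\}$. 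Taking square roots and multiplying, the two groups recombine to $\exp\{\lambda^2\sum_{k=i+1}^j a_k^2\tau_k^2\}=\exp\{\frac{\lambda^2}{2}\sum_{k=i+1}^j 2a_k^2\tau_k^2\}$, which is precisely the required bound with $u_k=2a_k^2\tau_k^2$ and explains the factor $2$ in $\tilde v$. The one point to pin down carefully is the negative-dependence exponential-product inequality for same-signed exponents, which is exactly where the hypothesis of (2) is used.
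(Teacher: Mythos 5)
Your proposal is correct and follows essentially the same route as the paper: all three parts are established by verifying the moment bound \eqref{processus} directly, and your downward telescoping induction in (3) is the paper's proof verbatim. The only difference is one of detail, not of method: where the paper simply cites Lemma 1.7 of Chapter I in \cite{buld} for (1) and the proof of Theorem 1 in \cite{amini2} for the factor-$2$ bound in (2), you reconstruct the arguments, and your Cauchy--Schwarz splitting into the same-sign groups $P$ and $N$ is precisely the argument behind the cited bound, the exponential product inequality you flag being the standard consequence of the facts that monotone images of ND variables are again ND and that positive ND variables satisfy $\mathbb{E}\bigl(\prod_k Z_k\bigr)\le\prod_k\mathbb{E}(Z_k)$.
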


\begin{proof}
Let $0\le n<m,\ t\in\mathbb{R}$ and $\mathcal{X}$ in
$Sub(\Omega)$.

1. Assume that $\mathcal{X}$ is a sequence of independent random
variables, then we have by Lemma 1.7 of Chapter I in \cite{buld}:
$\tau^2(\sum_{n+1}^m a_k X_k)\le \sum_{n+1}^{m} a_k^2\tau^2_k$,
hence
\begin{equation*}
\mathbb{E}\exp\ \!\bigl\{t\sum_{n+1}^m a_k X_k\bigr\}\le\exp\
\!\Bigl\{\frac{d^2(n,m)\ \! t^2}{2}\Bigr\},
\end{equation*}
\noindent where we have set $d^2(i,j)=\sum_{i\wedge j+1}^{i\vee j}
a_k^2\tau_k^2$ for $i,j\in\mathbb{N}$. This proves the first
point.

2. If $\mathcal{X}$ is a sequence of ND random variables, then
according to the proof of Theorem 1 in \cite{amini2},
$\tau^2(\sum_{n+1}^m a_k X_k)\le 2\sum_{n+1}^{m} a_k^2\tau^2_k$,
which is equivalent to the assertion in the second point.

 3. Let $\mathcal{X}$ be $(\mathcal{B},{\bf c}^2)$-conditionally subgaussian,
 then
\begin{align*}
\mathbb{E}\exp\Bigl\{t\!\!\sum_{k=n+1}^m a_kX_k\Bigr\} & =
\mathbb{E}\Bigl(\mathbb{E}\bigl(\exp\Bigl\{t\!\!\sum_{k=n+1}^{m}
a_kX_k\Bigr\}/
\mathcal{B}_{m-1}\bigr)\Bigr)   \\
&= \mathbb{E}\Bigl(\exp\Bigl\{t\!\!\sum_{k=n+1}^{m-1}
a_kX_k\Bigr\}
\mathbb{E}\bigl(\exp\{t a_m X_m\}/\mathcal{B}_{m-1}\bigr)\Bigr)   \\
& \le\exp\{\frac{a_m^2c^2_m
t^2}{2}\}\mathbb{E}\exp\Bigl\{t\!\!\sum_{k=n+1}^{m-1}
a_kX_k\Bigr\}. \,
\end{align*}
\noindent By induction, we conclude that
$$
\mathbb{E}\exp\Bigl\{t\!\!\sum_{k=n+1}^m a_kX_k\Bigr\}\le
\exp\Bigl\{\frac{\sum_{k=n+1}^m a_k^2c^2_k}{2}\ \! t^2\Bigr\},
$$
\noindent which yields the last item of Proposition \ref{pr31}.
\end{proof}

\begin{remark}
Proposition \ref{pr31} and Theorem \ref{pr55} highlight the
interest of introducing sequences with
$d_{\tilde{u},\alpha}$-subgaussian increments, it shows that this
class of random variables provides a unified framework to study
the asymptotic behavior of series in $Sub(\Omega)$.
\end{remark}

Let $S^{*}_{n,m}(\tilde{a},\mathcal{X})$ and $S^{*}(\tilde{a},
\mathcal{X})$ be the maximal functions of
$\mathcal{S}(\tilde{a},\mathcal{X})$ defined by setting $Y_k=a_k
X_k$ in \eqref{max}.
\begin{Corollary}\label{acceptable2}
Let $0\le n<m,\ \mathcal{X}=\{X_k,\ k\ge 1\}$ a ND sequence in
$Sub(\Omega)$, with $X_k\hookrightarrow Sub(\tau_k)$ for $k\ge 1$,
then
\begin{equation*}
\|S^{*}_{n,m}(\tilde{a},\mathcal{X})\|_{\varphi}\le
16\sqrt{1+\sqrt{2}}\ \!C(1/2)\ \!\bigl(\sum_{k=n+1}^m
a_k^2\tau^2_k\bigr)^{\frac{1}{2}},\label{a1}
\end{equation*}
\noindent where $C(1/2)\simeq 8.26$.  Further, if
\begin{equation}
A^2(\tilde{a},\tau):=\sum_{k\ge 1}
a_k^2\tau^2_k<\infty,\label{acceptable3}
\end{equation}
 \noindent then
 $\|S^{*}(\tilde{a}, \mathcal{X})\|_{\varphi}\le 16\sqrt{1+\sqrt{2}}\
\!C(1/2)\ \! A(\tilde{a},\tau)$, the series $\{S_n(\tilde{a},
\mathcal{X}),\ n\ge 1\}$ converges with respect to the norms
$\tau(\cdot)$ and $\|\cdot\|_{\varphi}$ and $P$-almost surely to a
random variable $S_{\infty}(\tilde{a}, \mathcal{X})\in
Sub(\Omega)$ fulfilling
\begin{equation*}
\tau\bigl(S_{\infty}(\tilde{a}, \mathcal{X})\bigr)\le \sqrt{2}
A(\tilde{a},\tau)\qquad \text{ and }\qquad\|S_{\infty}(\tilde{a},
\mathcal{X})\|_{\varphi}\le 2\sqrt{1+\sqrt{2}}\ \!
A(\tilde{a},\tau).\label{est1}
\end{equation*}
\end{Corollary}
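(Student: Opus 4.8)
The plan is to read this off from Theorem \ref{pr55} by feeding it the pseudo-metric produced in the second item of Proposition \ref{pr31}. First I would put $\mathcal{Y}=\{a_k X_k,\ k\ge 1\}$, so that $S(\mathcal{Y})=\mathcal{S}(\tilde{a},\mathcal{X})$ and the maximal functions $S^{*}_{n,m}(\tilde{a},\mathcal{X})$, $S^{*}(\tilde{a},\mathcal{X})$ are exactly $S^{*}_{n,m}(\mathcal{Y})$, $S^{*}(\mathcal{Y})$. Since $\mathcal{X}$ is ND, Proposition \ref{pr31}(2) tells me that $\mathcal{S}(\tilde{a},\mathcal{X})$ is a $d_{\tilde{v},1/2}$-subgaussian series with $\tilde{v}=\{2a_k^2\tau_k^2,\ k\ge 1\}$. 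This is precisely the hypothesis of Theorem \ref{pr55} with $\alpha=1/2$ and $\tilde{u}=\tilde{v}$, so I may simply quote its conclusions.

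For the finite-index bound, the first assertion of Theorem \ref{pr55} gives $\|S^{*}_{n,m}(\tilde{a},\mathcal{X})\|_{\varphi}\le 8\sqrt{2+2\sqrt{2}}\ \!C(1/2)\ \!d_{\tilde{v},1/2}(n,m)$, and by Definition \ref{distance} one has $d_{\tilde{v},1/2}(n,m)=\sqrt{2}\bigl(\sum_{k=n+1}^m a_k^2\tau_k^2\bigr)^{1/2}$. The only computation is the radical simplification $8\sqrt{2+2\sqrt{2}}\cdot\sqrt{2}=8\sqrt{4+4\sqrt{2}}=16\sqrt{1+\sqrt{2}}$, which yields the claimed inequality.

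For the infinite series, assuming $A^2(\tilde{a},\tau)=\sum_k a_k^2\tau_k^2<\infty$ I note ${\bf u}:=\sum_k v_k=2A^2(\tilde{a},\tau)<\infty$, so the convergence hypothesis of Theorem \ref{pr55} holds. Its second part then delivers convergence of $\{S_n(\tilde{a},\mathcal{X})\}$ with respect to $\tau(\cdot)$, with respect to $\|\cdot\|_{\varphi}$ and $P$-almost surely to some $S_{\infty}(\tilde{a},\mathcal{X})\in Sub(\Omega)$, together with the three estimates once I substitute ${\bf u}^{1/2}=\sqrt{2}A(\tilde{a},\tau)$: the bound on $\|S^{*}(\tilde{a},\mathcal{X})\|_{\varphi}$ uses the same simplification, the inequality $\tau(S_{\infty})\le{\bf u}^{1/2}=\sqrt{2}A(\tilde{a},\tau)$ is immediate, and $\|S_{\infty}\|_{\varphi}\le\sqrt{2+2\sqrt{2}}\,{\bf u}^{1/2}=2\sqrt{1+\sqrt{2}}\,A(\tilde{a},\tau)$.

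There is no genuine obstacle here: every inequality descends verbatim from Theorem \ref{pr55}, and the ND structure enters only through the constant $2$ in $\tilde{v}$ supplied by Proposition \ref{pr31}(2). The one thing to watch is the repeated radical bookkeeping, namely correctly absorbing the factor $\sqrt{2}$ (coming from that ND constant) into $8\sqrt{2+2\sqrt{2}}$ and into $\sqrt{2+2\sqrt{2}}$, and keeping the numerical value $C(1/2)\simeq 8.26$ consistent with the Remarks following Theorem \ref{pr1}.
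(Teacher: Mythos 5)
Your proposal is correct and follows exactly the paper's own route: invoke Proposition \ref{pr31}(2) to identify $\mathcal{S}(\tilde{a},\mathcal{X})$ as a $d_{\tilde{u},1/2}$-subgaussian series with $\tilde{u}=\{2a_k^2\tau_k^2,\ k\ge 1\}$, then apply Theorem \ref{pr55} with $\alpha=1/2$ and ${\bf u}=2A^2(\tilde{a},\tau)$. Your radical simplifications, e.g. $8\sqrt{2+2\sqrt{2}}\cdot\sqrt{2}=16\sqrt{1+\sqrt{2}}$, which the paper leaves implicit, are all carried out correctly.
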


\begin{proof}
By the second item of Proposition \ref{pr31}, $S(\tilde{a},
\mathcal{X})$ is a $d_{\tilde{u},\frac{1}{2}}$-subgaussian series
with $\tilde{u}=\{2a^2_k\tau_k^2,\ k\ge 1\}$. The result follows
by applying Theorem \ref{pr55} to $\{a_kX_k,\ k\ge 1\}$ with ${\bf
u}=2 A^2(\tilde{a},\tau)$ and $\alpha=1/2$.
\end{proof}
For series with conditionally subgaussian increments sequences, we
have
\begin{Corollary}\label{condi}
Let $0\le n<m,\ \mathcal{X}=\{X_k,\ k\ge 1\}$ a $(\mathcal{B},{\bf
c}^2)$-conditionally subgaussian, then
\begin{equation*}
\|S^{*}_{n,m}(\tilde{a},\mathcal{X})\|_{\varphi}\le
8\sqrt{2+2\sqrt{2}}\ \!C(1/2)\bigl(\sum_{k=n+1}^m a_k^2
c^2_k\bigr)^{\frac{1}{2}},\label{a2}
\end{equation*}
\noindent where $C(1/2)\simeq 8.26$. If in addition
$B^2(\tilde{a},{\bf c}):=\sum_{k\ge 1} a_k^2 c^2_k<\infty$, then
$\|S^{*}(\tilde{a}, \mathcal{X})\|_{\varphi}\le
8\sqrt{2+2\sqrt{2}}\ \!C(1/2)B(\tilde{a},{\bf c})$,
 furthermore $\{S_n(\tilde{a},
\mathcal{X}),\ n\ge 1\}$ converges with respect to the norms
$\tau(\cdot)$ and $\|\cdot\|_{\varphi}$ and $P$-almost surely to a
random variable $S_{\infty}(\tilde{a}, \mathcal{X})$ in
$Sub(\Omega)$ and fulfilling
\begin{equation*}
\tau\bigl(S_{\infty}(\tilde{a}, \mathcal{X})\bigr)\le
B(\tilde{a},{\bf c})\qquad \text{ and
}\qquad\|S_{\infty}(\tilde{a}, \mathcal{X})\|_{\varphi}\le
\sqrt{2+2\sqrt{2}}\ \! B(\tilde{a},{\bf c}).\label{est2}
\end{equation*}
\end{Corollary}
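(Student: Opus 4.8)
The plan is to mirror the proof of Corollary \ref{acceptable2}, replacing the negative-dependence input by the conditional subgaussianity bound; all the analytic work has already been done in Proposition \ref{pr31} and Theorem \ref{pr55}, so what remains is to identify the correct weight sequence and read off the constants.

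First I would invoke the third item of Proposition \ref{pr31}: since $\mathcal{X}$ is $(\mathcal{B},{\bf c}^2)$-conditionally subgaussian, the weighted series $S(\tilde{a},\mathcal{X})$ is a $d_{\tilde{w},\frac{1}{2}}$-subgaussian series with $\tilde{w}=\{a_k^2 c_k^2,\ k\ge 1\}$. Equivalently, setting $\mathcal{Y}=\{a_k X_k,\ k\ge 1\}$, the sequence $\mathcal{Y}$ is a $d_{\tilde{w},\frac{1}{2}}$-subgaussian increments sequence in the sense of Definition \ref{dsubgauss}, and its total weight is ${\bf u}=\sum_{k\ge 1} a_k^2 c_k^2 = B^2(\tilde{a},{\bf c})$.

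Next I would apply Theorem \ref{pr55} to $\mathcal{Y}$ with $\alpha=1/2$. Since $d_{\tilde{w},1/2}(n,m)=\bigl(\sum_{k=n+1}^m a_k^2 c_k^2\bigr)^{1/2}$, the finite-range estimate of Theorem \ref{pr55} reads $\|S^{*}_{n,m}(\tilde{a},\mathcal{X})\|_{\varphi}\le 8\sqrt{2+2\sqrt{2}}\, C(1/2)\,\bigl(\sum_{k=n+1}^m a_k^2 c_k^2\bigr)^{1/2}$, which is the first claim. Under the summability hypothesis $B^2(\tilde{a},{\bf c})<\infty$, the second half of Theorem \ref{pr55} gives convergence of $\{S_n(\tilde{a},\mathcal{X}),\ n\ge 1\}$ with respect to $\tau(\cdot)$, with respect to $\|\cdot\|_{\varphi}$, and $P$-almost surely to some $S_{\infty}(\tilde{a},\mathcal{X})\in Sub(\Omega)$, together with the maximal-function bound at total weight ${\bf u}=B^2(\tilde{a},{\bf c})$ and the estimates $\tau(S_{\infty})\le {\bf u}^{1/2}$ and $\|S_{\infty}\|_{\varphi}\le\sqrt{2+2\sqrt{2}}\,{\bf u}^{1/2}$. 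Substituting ${\bf u}^{1/2}=B(\tilde{a},{\bf c})$ throughout produces exactly the constants in the statement.

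I do not expect any genuine obstacle, as the proof is a direct specialization of Theorem \ref{pr55}; the only point deserving care is the bookkeeping with $\alpha=1/2$, namely that raising ${\bf u}=B^2(\tilde{a},{\bf c})$ to the power $1/2$ returns $B(\tilde{a},{\bf c})$, and that $C(1/2)\simeq 8.26$ is carried over unchanged from Theorem \ref{pr1}. It is worth noting why the leading constant differs from Corollary \ref{acceptable2}: there the negative-dependence bound forces the weight $\tilde{u}=\{2a_k^2\tau_k^2\}$, so ${\bf u}=2A^2(\tilde{a},\tau)$ and an extra factor $\sqrt{2}$ turns $8\sqrt{2+2\sqrt{2}}$ into $16\sqrt{1+\sqrt{2}}$, whereas here the exact conditional bound $\sum a_k^2 c_k^2$ carries no such factor and the constant $8\sqrt{2+2\sqrt{2}}$ survives verbatim.
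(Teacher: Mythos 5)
Your proposal is correct and takes essentially the same route as the paper's own proof: item (3) of Proposition \ref{pr31} identifies $S(\tilde{a},\mathcal{X})$ as a $d_{\tilde{w},\frac{1}{2}}$-subgaussian series with $\tilde{w}=\{a_k^2c_k^2,\ k\ge 1\}$, and applying Theorem \ref{pr55} to $\{a_kX_k,\ k\ge 1\}$ with ${\bf u}=B^2(\tilde{a},{\bf c})$ and $\alpha=1/2$ yields every stated bound. Your closing remark on the constants is also accurate: the ND case of Corollary \ref{acceptable2} carries the extra factor from the weight $\{2a_k^2\tau_k^2\}$, turning $8\sqrt{2+2\sqrt{2}}$ into $16\sqrt{1+\sqrt{2}}$, whereas here the weight $\{a_k^2c_k^2\}$ leaves $8\sqrt{2+2\sqrt{2}}$ unchanged.
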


\begin{proof}

By the last item in Proposition \ref{pr31}, $S(\tilde{a},
\mathcal{X})$ is a $d_{\tilde{u},\frac{1}{2}}$-subgaussian series
with $\tilde{u}=\{a^2_k c_k^2,\ k\ge 1\}$, hence it is enough to
apply Theorem \ref{pr55} to $\{a_kX_k,\ k\ge 1\}$ with ${\bf
u}=B^2(\tilde{a},{\bf c})$ and $\alpha=1/2$.
\end{proof}

\begin{remarks}
\begin{enumerate}
\item Corollary \ref{acceptable2} extend the well-known results on
the convergence of series of independent gaussian $($resp.
Rademacher$)$ random variables.

\item Let $\sigma=\{\sigma_k,\ k\ge 1\}$ be a sequence of positive
numbers and consider a sequence $g=\{g_k,\ k\ge 1\}$ of
independent Gaussian random variables such that
$g_k\hookrightarrow\mathcal{N}(0,\sigma_k^2)$. Put
$S_n(\tilde{a},g)=\sum_{1}^n a_k g_k$ for $n\ge 1$. In this case
$\tau(g_k)=\sigma_k$, besides by Theorem 6.1 in \cite{lt}, the
series $\{S_n(\tilde{a},g),\ n\ge 1\}$ converges $P$ almost surely
and with respect to the $L^2$-norm, if and only if
$\sum_1^{\infty}a_k^2\tau^2(g_k)<\infty$. This indicates that the
condition in \eqref{acceptable3} can not be weakened without
additional assumptions.

\item Corollary \ref{acceptable2} $($resp. Corollary
\ref{condi}$)$ is stated in \cite{amini1, amini2, chow, ouy}
$($resp. \cite{az}$)$ under stronger conditions.
\end{enumerate}
\end{remarks}

%%%%%%%%%%%%%%%%%%%%%%%%%%%%%%%%%%%%%%%%%%%%%%%%%

The following result illustrates again the convenience of
introducing $d$-subgaussian processes. Indeed the increments
sequence of the series in Theorem \ref{weber} is not necessarily
acceptable or $(\mathcal{B}, {\bf c}^2)$-conditionally
subgaussian, therefore the results of \cite{amini1, amini2, az,
chow, anto2, ouy} do not apply for it. We first remind that the
decoupling coefficient $p(g)$ of a stationary centered Gaussian
sequence $g=\{g_k,\ k\ge 1\}$ is given by
$$
p(g):=\sum_{k=1}^{\infty}\Bigl|
\frac{\mathbb{E}(g_1g_k)}{\mathbb{E}(g_1^2)}\Bigr|
$$
\begin{Theorem}\label{weber}
Let $g=\{g_k,\ k\ge 1\},\ g^{'}=\{g^{'}_k,\ k\ge 1\}$ be two
stationary Gaussian sequences of $\mathcal{N}(0,1)$ random
variables with finite decoupling coefficients, $\tilde{a}=\{a_k,\
k\ge 1\}$ and $\tilde{b}=\{b_k,\ k\ge 1\}$ two sequences of real
numbers. For $n\ge 1$, put
$$
R_n=\sum_{k=1}^n a_kg_k+b_kg^{'}_k,\qquad \mathcal{R}=\{R_n,\ n\ge
1\}.
$$
Then, $\mathcal{R}$ is a sequence with $\tilde{d}$-subgaussian
increment sequence, where we have set
\begin{equation}
\tilde{d}^2(i,j)=2\max\bigl(p(g),p(g^{'})\bigr) \sum_{k=i\wedge
j+1}^{i\vee j}\!\!a^2_k+b^2_k,\label{w2}
\end{equation}
\noindent for $i,j\in \mathbb{N}$. If in addition
$\sum_{1}^{\infty} a^2_k+b^2_k<\infty$, then $\mathcal{R}$
converges with respect to $\tau(\cdot)$ and $\|\cdot\|_{\varphi}$
and $P$-almost surely to a random variable $R_{\infty}\in
Sub(\Omega)$ such that
\begin{equation}
\tau(R_{\infty})\le \Bigl(2\max\bigl(p(g),p(g^{'})\bigr)
\!\sum_{k=1}^{\infty}\! a^2_k+b^2_k\Bigr)^{\frac{1}{2}}.\label{r}
\end{equation}
\end{Theorem}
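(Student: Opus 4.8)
The plan is to verify that $\mathcal{R}$ satisfies the $\tilde{d}$-subgaussian increment condition \eqref{processus} and then to read off convergence directly from Theorem \ref{pr55}. Fix $0\le i<j$ and write the increment as $R_j-R_i=W+V$, where $W=\sum_{k=i+1}^j a_kg_k$ and $V=\sum_{k=i+1}^j b_kg'_k$. Each of $W$ and $V$ is a finite linear combination inside a single stationary Gaussian sequence, hence a centered Gaussian variable; using that the two sequences $g$ and $g'$ are independent, $R_j-R_i$ is itself centered Gaussian with $\mathrm{Var}(R_j-R_i)=\mathrm{Var}(W)+\mathrm{Var}(V)$. For a centered Gaussian one has $\mathbb{E}\exp\{\lambda(R_j-R_i)\}=\exp\{\lambda^2\,\mathrm{Var}(R_j-R_i)/2\}$, so it suffices to bound this variance by $\tilde{d}^2(i,j)$.

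The core computation is the variance estimate. Writing $r(k-l)=\mathbb{E}(g_kg_l)$, which by stationarity depends only on $k-l$ and is symmetric with $r(0)=1$, I would expand $\mathrm{Var}(W)=\sum_{k,l=i+1}^j a_ka_l\,r(k-l)$, apply $|a_ka_l|\le\frac12(a_k^2+a_l^2)$ together with the symmetry of $r$ to get $\mathrm{Var}(W)\le\sum_{k} a_k^2\sum_{l}|r(k-l)|$, and finally use $\sum_{l}|r(k-l)|\le\sum_{n\in\mathbb{Z}}|r(n)|=2\sum_{n\ge0}|r(n)|-r(0)=2p(g)-1\le 2p(g)$, where $\sum_{n\ge0}|r(n)|=p(g)$ by the definition of the decoupling coefficient. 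This yields $\mathrm{Var}(W)\le 2p(g)\sum_{k=i+1}^j a_k^2$ and, symmetrically, $\mathrm{Var}(V)\le 2p(g')\sum_{k=i+1}^j b_k^2$. Adding the two and bounding each coefficient by $2\max(p(g),p(g'))$ gives $\mathrm{Var}(R_j-R_i)\le \tilde{d}^2(i,j)$ with $\tilde{d}$ as in \eqref{w2}, which establishes \eqref{processus} and hence that $\mathcal{R}$ is a $\tilde{d}$-subgaussian series.

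For the second assertion I would recast $\tilde{d}$ in the notation of Definition \ref{distance}: with $u_k=2\max(p(g),p(g'))(a_k^2+b_k^2)$ and $\alpha=1/2$ one has $\tilde{d}(i,j)=d_{\tilde{u},1/2}(i,j)$, so $\mathcal{R}$ is a $d_{\tilde{u},1/2}$-subgaussian increments sequence. When $\sum_{k\ge1}(a_k^2+b_k^2)<\infty$ the total ${\bf u}=\sum_k u_k=2\max(p(g),p(g'))\sum_k(a_k^2+b_k^2)$ is finite, so Theorem \ref{pr55} applied with $\alpha=1/2$ delivers convergence of $\mathcal{R}$ with respect to $\tau(\cdot)$ and $\|\cdot\|_\varphi$ and $P$-almost surely to some $R_\infty\in Sub(\Omega)$, together with $\tau(R_\infty)\le {\bf u}^{1/2}$, which is precisely \eqref{r}.

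The main obstacle is the variance estimate of the middle paragraph: one must see that the one-sided decoupling coefficient controls the full two-sided sum $\sum_{l}|r(k-l)|$, which is where the factor $2$ in \eqref{w2} originates, and one must use the independence of $g$ and $g'$ to discard the cross-covariance term. A nonzero correlation between the two sequences would spoil the additive split $\mathrm{Var}(W)+\mathrm{Var}(V)$ and the clean bound $2\max(p(g),p(g'))$; indeed taking $g'=g$ and $b_k=a_k$ already violates the stated estimate, so independence is essential here. Everything after the increment condition is a direct invocation of Theorem \ref{pr55}.
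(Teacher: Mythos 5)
There is a genuine gap, and it is exactly where you declared victory: you have added the hypothesis that $g$ and $g'$ are independent, but the theorem does not assume this, and the paper's proof opens by stressing that $g$ and $g'$ are \emph{not} assumed independent, so that $R_n$ need not even be a Gaussian random variable. This generality is the whole point of the example (its increments need not be acceptable or conditionally subgaussian, which is why it lies outside the earlier literature); under your independence assumption the $R_n$ are jointly Gaussian and the statement loses most of its interest. Concretely, your additive split $\mathrm{Var}(R_j-R_i)=\mathrm{Var}(W)+\mathrm{Var}(V)$ and the identity $\mathbb{E}\exp\{\lambda(R_j-R_i)\}=\exp\{\lambda^2\mathrm{Var}(R_j-R_i)/2\}$ are simply unavailable in the setting of the theorem. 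The paper instead handles arbitrary cross-dependence by Cauchy--Schwarz, $\mathbb{E}\exp\{\lambda(R_m-R_n)\}\le\bigl(\mathbb{E}\exp\{2\lambda W\}\,\mathbb{E}\exp\{2\lambda V\}\bigr)^{1/2}$, and then controls each factor \emph{within} a single stationary sequence by the Klein--Landau--Shucker decoupling inequality \eqref{kls} with $H_k(x)=e^{2\lambda a_kx}$, obtaining $\mathbb{E}\exp\{2\lambda W\}\le\exp\{2p(g)\lambda^2\sum_{k=n+1}^m a_k^2\}$. Note that the factor $2$ in \eqref{w2} comes from the doubling of $\lambda$ in the Cauchy--Schwarz step, not (as in your argument) from two-sidedness of the covariance sum.

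Your closing claim that taking $g'=g$ and $b_k=a_k$ ``already violates the stated estimate'' is false, and it is instructive to see why: the KLS inequality gives the sharp bound $\mathrm{Var}(W)\le p(g)\sum_k a_k^2$, strictly better than the Toeplitz-type bound $\bigl(2p(g)-1\bigr)\sum_k a_k^2$ you derived from $|a_ka_l|\le\frac12(a_k^2+a_l^2)$; hence with $g'=g$, $b_k=a_k$ one gets $\mathbb{E}\exp\{\lambda\cdot 2W\}=\exp\{2\lambda^2\mathrm{Var}(W)\}\le\exp\{2p(g)\lambda^2\sum_k a_k^2\}=\exp\{\tilde d^{\,2}(i,j)\lambda^2/2\}$, since here $\tilde d^{\,2}(i,j)=4p(g)\sum_{k=i+1}^{j}a_k^2$, and \eqref{processus} holds after all. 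So independence is not essential; what is essential is the decoupling inequality \eqref{kls}, which your argument never invokes. The remainder of your proposal --- within-sequence variance bookkeeping in the independent case, and the reduction to Theorem \ref{pr55} with $u_k=2\max\bigl(p(g),p(g')\bigr)(a_k^2+b_k^2)$ and $\alpha=1/2$ to obtain \eqref{r} --- is correct and agrees with the paper, but the proof of the increment condition must be replaced by the Cauchy--Schwarz plus KLS argument to cover the stated hypotheses.
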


\begin{proof}
First notice that $g$ and $g^{'}$ are not assumed to be
independent, so $R_n$ is not necessarily a Gaussian random
variable. To prove the claim, we argue as in \cite{w0}. We begin
by recording a result of Klein-Landau-Shucker $($\cite{kls},
Theorem $1):$ Let $\gamma=\{\gamma_k,\ k\ge 1\}$ be a stationary
Gaussian sequence of $\mathcal{N}(0,1)$ random variables, with a
finite decoupling coefficient $p(\gamma)$, then
\begin{equation}
\Bigl|\mathbb{E}\Bigl(\prod_{k\in J} H_k(\gamma_k)\Bigr)\Bigr|\le
\prod_{k\in J}\bigl\| H_k(\gamma_k)
\bigr\|_{p(\gamma)},\label{kls}
\end{equation}
\noindent for any finite subset $J$ of $\mathbb{N}$ and any family
$\{H_k,\ k\ge 1\}$ of complex-valued Borel-measurable functions,
where $\|\cdot\|_{r}$ stands for the usual $L^{r}$-norm.

Let $1\le n<m$ and $\lambda\in\mathbb{R}$, applying
Cauchy-Schwarz's inequality, we get
\begin{equation}
\mathbb{E}\exp\bigl\{\lambda\bigl(R_m-R_n\bigr)\bigr\}\le
\Bigl(\mathbb{E}\exp\Bigl\{2\lambda\!\!\sum_{k=n+1}^{m}
a_kg_k\Bigr\}\mathbb{E}\exp\Bigl\{2\lambda\!\!\sum_{k=n+1}^{m}
b_kg^{'}_k\Bigr\}\Bigr)^{\frac{1}{2}}.\label{w1}
\end{equation}
Set $Z(g)=\exp\Bigl\{2\lambda\sum_{k=n+1}^{m} a_kg_k\Bigr\},\
J=\{n+1,\ldots,m\}$ and apply \eqref{kls} to $g$ and $g^{'}$ with
$H_k^{\tilde{a}}(x)=e^{2\lambda a_k x}$ and $
H_k^{\tilde{b}}(x)=e^{2\lambda\ \!\! b_k x}$, respectively. We
obtain
$$
\mathbb{E}Z(g)\le\exp\Bigl\{2p(g)\lambda^2
\!\!\sum_{k=n+1}^{m}a^2_k\Bigr\}\quad\text{and}\quad
\mathbb{E}Z(g^{'})\le\exp\Bigl\{2p(g^{'})\lambda^2
\!\!\sum_{k=n+1}^{m}b^2_k\Bigr\},
$$
where we have used $\mathbb{E}\exp\{\lambda\
\!\mathcal{N}(0,1)\}=\exp(\frac{\lambda^2}{2})$. Going back to
\eqref{w1} we reach that
$$
\mathbb{E}\exp\bigl\{\lambda\bigl(R_m-R_n\bigr)\bigr\}\le
\exp\Bigl\{\max\bigl(p(g),p(g^{'})\bigr)
\lambda^2\!\!\sum_{k=n+1}^{m}\! a^2_k+b^2_k\Bigr\},
$$
\noindent thus $\mathcal{R}$ is a sequence with
$\tilde{d}$-subgaussian increments sequence, where $\tilde{d}$ is
the distance defined by \eqref{w2}. Furthermore
$\tau^2\bigl(R_m-R_n\bigr)\le2\max\bigl(p(g),p(g^{'})\bigr)
\sum_{k=n+1}^{m}\! a^2_k+b^2_k$. If in addition $\tilde{a}$ and
$\tilde{b}$ are in $\ell^2$, then Theorem \ref{pr55} ensures that
$\mathcal{R}$ converges with respect to $\tau(\cdot)$ and
$\|\cdot\|_{\varphi}$ and $P$-almost surely to a random variable
$R_{\infty}\in Sub(\Omega)$ and \eqref{r} follows.
\end{proof}
Now let $\tilde{b}=\{b_{nk},\ n\ge 1,\ k\ge 1\}$ be an array of
real numbers, we examine the problem of convergence of
$\mathcal{T}(\tilde{b},\mathcal{X})=\{T_n(\tilde{b},\mathcal{X}),\
n\ge 1\}$, where
\begin{equation}
T_n(\tilde{b},\mathcal{X})=\sum_{k=1}^{\infty}b_{nk} X_k,\qquad
n\ge 1\label{T}
\end{equation}
\noindent and $\mathcal{X}=\{X_k,\ k\ge 1\}$ belongs to one of the
following classes :
$$
\mathcal{X}\ \text{is ND},\quad X_k\hookrightarrow
Sub(\tau_k),\quad\text{ and }\quad
A_n^2(\tilde{b},\tau):=\sum_{k\ge
1}b_{nk}^2\tau_k^2<\infty,\leqno(\mathcal{ND})\label{serie4}
$$
\noindent or
$$
\mathcal{X}\ \text{is}\ (\mathcal{B},{\bf
c}^2)\text{-conditionally subgaussian}\quad\text{ and }\quad
B_n^2(\tilde{b},{\bf c}):=\sum_{k\ge 1}b_{nk}^2
c_k^2<\infty,\leqno(\mathcal{CS})\label{serie5}
$$
\noindent for $n,k\ge 1$. We point out that in view of Corollaries
\ref{acceptable2} and \ref{condi}, the random variable
$T_n(\tilde{b},\mathcal{X})$ is well-defined in both cases.
\begin{Theorem}\label{49}
Let $n\ge 1,\ \mathcal{X}$ a sequence in $(\mathcal{ND})$ $($resp.
in $(\mathcal{CS}))$, then $T_n(\tilde{b},\mathcal{X})$ is in
$Sub(\Omega)$ and $\tau\bigl(T_n(\tilde{b},\mathcal{X})\bigr)\le
\sqrt{2}A_n(\tilde{b},\tau)$ $($resp.
$\tau\bigl(T_n(\tilde{b},\mathcal{X})\bigr)\le B_n(\tilde{b},{\bf
c}))$. Furthermore, if
\begin{equation}
\sum_1^{\infty}\exp\{-\frac{t^2}{4\
\!A^2_n(\tilde{b},\tau)}\}<\infty\qquad (\text{resp.}
\sum_1^{\infty}\exp\{-\frac{t^2}{2\ \!B^2_n(\tilde{b},{\bf
c})}\}<\infty)\label{serie6}
\end{equation}
\noindent for each $t>0$, then the sequence
$\mathcal{T}(\tilde{b},\mathcal{X})$ converges $P$-almost surely
and with respect to $\tau(\cdot)$ and $\|\cdot\|_{\varphi}$ to
zero.
\end{Theorem}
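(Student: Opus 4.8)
The plan is to recognize each $T_n(\tilde{b},\mathcal{X})$ as the almost sure limit of a weighted series already treated in Corollaries \ref{acceptable2} and \ref{condi}, and then to deduce the three modes of convergence of the \emph{sequence} $\{T_n,\ n\ge 1\}$ to zero from the subgaussian tail estimate combined with the summability hypothesis \eqref{serie6}.

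First I would fix $n\ge 1$ and apply Corollary \ref{acceptable2} (resp. Corollary \ref{condi}) to the $n$-th row $\tilde{b}_n=\{b_{nk},\ k\ge 1\}$ of the array, i.e. with $a_k=b_{nk}$. In the $(\mathcal{ND})$ case the hypothesis $A_n^2(\tilde{b},\tau)=\sum_k b_{nk}^2\tau_k^2<\infty$ is exactly condition \eqref{acceptable3} for this weight, so $T_n(\tilde{b},\mathcal{X})$ coincides with $S_\infty(\tilde{b}_n,\mathcal{X})\in Sub(\Omega)$ and $\tau\bigl(T_n\bigr)\le\sqrt 2\,A_n(\tilde{b},\tau)$; in the $(\mathcal{CS})$ case Corollary \ref{condi} gives in the same way $T_n\in Sub(\Omega)$ and $\tau\bigl(T_n\bigr)\le B_n(\tilde{b},{\bf c})$. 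This already settles the first assertion.

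Next I would note that \eqref{serie6} forces $A_n(\tilde{b},\tau)\to 0$ (resp. $B_n(\tilde{b},{\bf c})\to 0$): if some subsequence stayed bounded below by a constant $\delta>0$, then for a fixed $t>0$ the corresponding terms $\exp\{-t^2/(4A_n^2)\}$ would be bounded below by $\exp\{-t^2/(4\delta^2)\}>0$, contradicting convergence of the series of nonnegative terms. From $\tau(T_n)\le\sqrt 2\,A_n\to 0$ (resp. $\tau(T_n)\le B_n\to 0$) I obtain at once the convergence of $\mathcal{T}(\tilde{b},\mathcal{X})$ to zero with respect to $\tau(\cdot)$, and then with respect to $\|\cdot\|_\varphi$ through the right-hand inequality in \eqref{equivalence}.

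The main step is the almost sure convergence, which is where \eqref{serie6} is used in full. For any $X\in Sub(\Omega)$ the defining inequality $\mathbb{E}\exp\{tX\}\le\exp\{\tau^2(X)t^2/2\}$ yields, by the Chernoff bound optimized at $t=\epsilon/\tau^2(X)$ and applied to both $X$ and $-X$, the two-sided tail estimate $P(|X|>\epsilon)\le 2\exp\{-\epsilon^2/(2\tau^2(X))\}$ for every $\epsilon>0$. Inserting $\tau^2(T_n)\le 2A_n^2$ (resp. $\tau^2(T_n)\le B_n^2$) I get
\[
P\bigl(|T_n|>\epsilon\bigr)\le 2\exp\Bigl\{-\frac{\epsilon^2}{4A_n^2(\tilde{b},\tau)}\Bigr\}\qquad\Bigl(\text{resp.}\ \ 2\exp\Bigl\{-\frac{\epsilon^2}{2B_n^2(\tilde{b},{\bf c})}\Bigr\}\Bigr).
\]
The delicate point is the bookkeeping of constants: the factor $\sqrt 2$ in the $(\mathcal{ND})$ bound on $\tau(T_n)$ produces the denominator $4A_n^2$, while the factor $1$ in the $(\mathcal{CS})$ case produces $2B_n^2$, so that taking $t=\epsilon$ in \eqref{serie6} and summing over $n$ gives $\sum_n P(|T_n|>\epsilon)<\infty$ for every $\epsilon>0$. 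The Borel--Cantelli lemma then yields $P(|T_n|>\epsilon\ \text{infinitely often})=0$ for each $\epsilon>0$, hence $T_n\to 0$ $P$-almost surely, which completes the argument.
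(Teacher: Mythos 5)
Your proposal is correct and follows essentially the same route as the paper: the paper likewise obtains $T_n(\tilde{b},\mathcal{X})\in Sub(\Omega)$ with $\tau(T_n)\le\sqrt{2}A_n(\tilde{b},\tau)$ (resp. $\le B_n(\tilde{b},{\bf c})$) by applying the $d_{\tilde{u},1/2}$-subgaussian machinery rowwise (your citation of Corollaries \ref{acceptable2} and \ref{condi} packages exactly that step), then derives the same two-sided Chernoff bound $P(|T_n|\ge t)\le 2\exp\{-t^2/(4A_n^2)\}$ by Markov's inequality and optimization in $\lambda$, and concludes via Borel--Cantelli together with $A_n\to 0$ for the $\tau$- and $\|\cdot\|_{\varphi}$-convergence. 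Your constant bookkeeping (the factor $\sqrt{2}$ producing $4A_n^2$ versus $2B_n^2$) matches the paper's exactly.
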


\begin{proof}
Assume that $\mathcal{X}$ is in the class $(\mathcal{ND})$, fix
$n\ge 1$ and set $u_k(n)=2b^2_{nk}\tau^2_k$ and
$\tilde{u}(n)=\{u_k(n),\ k\ge 1\}$. According to the second item
of Proposition \ref{pr31}, $\{\sum_{k=1}^{m}b_{nk} X_k,\quad m\ge
1\}$ is a $d_{\tilde{u}(n),\frac{1}{2}}$-subgaussian series, as
$A_n^2(\tilde{b},\tau)<\infty$, it converges $P$-almost surely and
with respect to $\tau(\cdot)$ and $\|\cdot\|_{\varphi}$ to
$T_n(\tilde{b},\mathcal{X})\in Sub(\Omega)$ and
\begin{equation}
\tau\bigl(T_n(\tilde{b},\mathcal{X})\bigr)\le
\sqrt{2}A_n(\tilde{b},\tau).\label{cond2}
\end{equation}
\noindent Let $\lambda,\ t>0$, the previous estimate and Markov's
inequality yield
\begin{align*}
P\{|T_n(\tilde{b},\mathcal{X})|\ge t\} &\le\exp\{-\lambda
t\}\bigl(\mathbb{E} \exp\{\lambda
T_n(\tilde{b},\mathcal{X})\}+\mathbb{E}
\exp\{-\lambda T_n(\tilde{b},\mathcal{X})\}\bigr)        \\
& \le 2\ \!\exp\{-\lambda t+A^2_n(\tilde{b},\tau)\lambda^2\} . \,
\end{align*}
\noindent Optimizing with respect to $\lambda$, we reach that
$$
P\{|T_n(\tilde{b},\mathcal{X})|\ge t\}\le 2\exp\{-\frac{t^2}{4\
\!A^2_n(\tilde{b},\tau)}\}.
$$
Combining this with \eqref{serie6} and the Borel-Cantelli lemma,
we get the $P$-\! almost sure convergence to zero of
$\mathcal{T}(\tilde{b},\mathcal{X})$. Besides, by \eqref{serie6}
$\lim_{n\rightarrow\infty} A_n(\tilde{b},\tau)=0$, this and
\eqref{cond2} entail the $\tau$-convergence to zero of
$\mathcal{T}(\tilde{b},\mathcal{X})$. The convergence with respect
to $\|\cdot\|_{\varphi}$ comes easily from \eqref{equivalence}.
The proof of the Theorem for a conditionally subgaussian
increments sequence is similar, we omit it.
\end{proof}
Let $\beta>0$, applying the last result to the sequence
$b_{nk}=\bigl(n\ln^{1+\beta}(n)\bigr)^{-\frac{1}{2}}$ for $2\le
k\le n$ and $b_{nk}=0$ otherwise, we get
\begin{Corollary}\label{am}
Let $\mathcal{X}$ be a sequence in $(\mathcal{ND})$ $($resp. in
$(\mathcal{CS}))$ with $\sup_{k\ge 1}\tau_k<\infty\ ($resp.
$\sup_{k\ge 1}c_k<\infty)$, then $P$-almost surely,
$$
\lim_{n\rightarrow\infty}\bigl(n\ln^{1+\beta}(n)\bigr)^{-1/2}S_n=0.
$$
\end{Corollary}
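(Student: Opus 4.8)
The plan is to obtain the statement as a direct specialization of Theorem~\ref{49} to the array $\tilde{b}=\{b_{nk}\}$ with $b_{nk}=(n\ln^{1+\beta}(n))^{-1/2}$ for $2\le k\le n$ and $b_{nk}=0$ otherwise (so that $n\ge 2$ throughout). For this array
\begin{equation*}
T_n(\tilde{b},\mathcal{X})=\bigl(n\ln^{1+\beta}(n)\bigr)^{-1/2}\sum_{k=2}^n X_k,
\end{equation*}
which differs from $(n\ln^{1+\beta}(n))^{-1/2}S_n$ only by the term $(n\ln^{1+\beta}(n))^{-1/2}X_1$. Since $X_1$ is a fixed random variable and the prefactor tends to $0$, this extra term converges to $0$ almost surely, so it suffices to prove that $\mathcal{T}(\tilde{b},\mathcal{X})\to 0$ $P$-almost surely; by Theorem~\ref{49} this reduces to checking the summability hypothesis \eqref{serie6}.

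First I would bound the coefficient. Writing $\tau_\infty:=\sup_{k\ge 1}\tau_k<\infty$, one has
\begin{equation*}
A_n^2(\tilde{b},\tau)=\frac{1}{n\ln^{1+\beta}(n)}\sum_{k=2}^n\tau_k^2\le\frac{\tau_\infty^2}{\ln^{1+\beta}(n)},
\end{equation*}
hence for each fixed $t>0$,
\begin{equation*}
\exp\Bigl\{-\frac{t^2}{4A_n^2(\tilde{b},\tau)}\Bigr\}\le\exp\Bigl\{-\frac{t^2}{4\tau_\infty^2}(\ln n)^{1+\beta}\Bigr\}=n^{-c(\ln n)^{\beta}},\qquad c:=\frac{t^2}{4\tau_\infty^2}>0.
\end{equation*}
The key step is the convergence of $\sum_n n^{-c(\ln n)^{\beta}}$. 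Because $\beta>0$, the exponent $c(\ln n)^{\beta}\to+\infty$, so $c(\ln n)^{\beta}\ge 2$ for all large $n$ and the general term is eventually dominated by $n^{-2}$; the series therefore converges for \emph{every} $t>0$. This is precisely where $\beta>0$ is indispensable: for $\beta=0$ the terms would behave like $n^{-c}$ and \eqref{serie6} would fail for small $t$.

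With \eqref{serie6} verified, Theorem~\ref{49} gives $\mathcal{T}(\tilde{b},\mathcal{X})\to 0$ $P$-almost surely, and adding back the vanishing $X_1$ contribution yields $\lim_{n}(n\ln^{1+\beta}(n))^{-1/2}S_n=0$ almost surely. The conditionally subgaussian case is identical: one estimates $B_n^2(\tilde{b},{\bf c})\le(\sup_{k}c_k^2)/\ln^{1+\beta}(n)$ and runs the same computation against the second form of \eqref{serie6}. There is no real obstacle here; the only point requiring attention is the summability estimate and the recognition that the factor $(\ln n)^{\beta}$ supplied by $\beta>0$ is exactly what pushes the series to converge for all $t>0$, while the passage from $T_n$ to $S_n$ is harmless bookkeeping.
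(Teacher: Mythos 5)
Your proposal is correct and follows exactly the paper's route: the paper proves Corollary \ref{am} by the one-line specialization of Theorem \ref{49} to the array $b_{nk}=\bigl(n\ln^{1+\beta}(n)\bigr)^{-1/2}$ for $2\le k\le n$ and $b_{nk}=0$ otherwise, and your verification of \eqref{serie6} via $A_n^2(\tilde{b},\tau)\le \tau_\infty^2/\ln^{1+\beta}(n)$ and $n^{-c(\ln n)^{\beta}}$ just fills in the computation the authors leave implicit. Your explicit handling of the discarded $X_1$ term (which the paper silently absorbs) is a harmless and welcome extra detail, not a deviation.
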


%%%%%%%%%%%%%%%%%%%%%%%%%%%%%%%%%%%%%%%%%%%%%%%%%%%%%%%%%%%%%%%%%%%%%%%%%

%%           R E F E R E N C E S

%%%%%%%%%%%%%%%%%%%%%%%%%%%%%%%%%%%%%%%%%%%%%%%%%%%%%%%%%%%%%%%%%%%%%%%%%

\end{document}